\documentclass[11pt]{amsart}

\usepackage[T1]{fontenc}
\usepackage{lmodern}
\usepackage{amsmath,amssymb,mathtools, mathrsfs}
\usepackage[margin=1.12in]{geometry}
\usepackage{microtype}
\usepackage{xcolor}
\usepackage[colorlinks=true,linkcolor=blue!45!black,citecolor=blue!45!black,urlcolor=blue!45!black]{hyperref}

\usepackage{tikz}
\usetikzlibrary{arrows.meta}

\newtheorem{theorem}{Theorem}[section]
\newtheorem{proposition}[theorem]{Proposition}
\newtheorem{lemma}[theorem]{Lemma}
\newtheorem{corollary}[theorem]{Corollary}
\newtheorem{conjecture}[theorem]{Conjecture}

\theoremstyle{definition}

\theoremstyle{remark}

\newcommand{\N}{\mathbb N}
\newcommand{\Z}{\mathbb Z}
\newcommand{\C}{\mathbb C}

\newcommand{\ind}{\mathbf 1}
\newcommand{\Dpret}{\mathbb D}
\newcommand{\Mpret}{\mathcal M}
\newcommand{\ee}{\mathrm e}
\newcommand{\dd}{\,\mathrm d}
\newcommand{\oh}{{\mathrm o}}

\newcommand{\norm}[1]{\left\lVert#1\right\rVert}
\DeclareMathOperator{\Rea}{Re}

\numberwithin{equation}{section}

\author[B. Wang]{Biao Wang}
\address{School of Mathematics and Statistics, Yunnan University, Kunming, Yunnan 650500, China}
\email{bwang@ynu.edu.cn}

\author[S. Yi]{Shaoyun Yi}
\address{School of Mathematical Sciences, Xiamen University, Xiamen, Fujian 361005, China}
\email{yishaoyun926@xmu.edu.cn}

\title[Dynamical generalizations of the two-point Chowla conjecture]
{Dynamical generalizations of the two-point Chowla conjecture}
\date{\today}
\subjclass[2020]{11N37, 37A44}

\keywords{prime Omega function, Chowla conjecture, unique ergodicity}

\begin{document}

\begin{abstract}
In 1965, Chowla conjectured that the values of the Liouville function become asymptotically uncorrelated at consecutive natural numbers. Recently, the two-point case is resolved by OpenAI. In 2026, Charamaras and Richter conjectured a generalization of the Chowla conjecture to the asymptotic independence of the consecutive prime Omega functions. In this article, we prove the two-point case of their conjecture. As a consequence, we obtain a dynamical generalization of the two-point Chowla conjecture.
\end{abstract}

\maketitle

\section{Introduction and the main result}

Let $\Omega(n)$ be the number of prime factors of $n$
counted with multiplicity, and put $\Omega(1)=0$. 
We also write $\omega(n)=\sum_{p\mid n}1$ for the number of distinct
prime factors, and $\omega_{\mathcal R}(n)=\sum_{p\in\mathcal R,\,p\mid n}1$
for the count restricted to a finite prime set $\mathcal R$.
Throughout, $\N=\{0, 1,2,\ldots\}$.
Write $\lambda(n)=(-1)^{\Omega(n)}$ for the Liouville function.
It is well-known (e.g., \cite{Landau1909}) that the prime number theorem (PNT) is equivalent to the assertion that
\begin{equation}\label{pnt_Liouville}
	\lim_{N\to\infty}\frac1N\sum_{n=1}^N\lambda(n)=0.
\end{equation}

Let $k\ge1$ be a natural number. In 1965, Chowla \cite{Chowla1965} gave the following conjecture on the $k$-point correlations of the Liouville function: 
		for any sequence of $k$ distinct non-negative integers  $h_1, \ldots, h_k$, we have
		\begin{equation}\label{Chowla_conjecture_eqn}
			\lim_{N\to\infty}\frac1N\sum_{n=1}^N \lambda(n+h_1)\ldots\lambda(n+h_k) = 0.
		\end{equation}
For the case $k=1$, Chowla's conjecture \eqref{Chowla_conjecture_eqn} is the same as  \eqref{pnt_Liouville}. For $k=2$, Tao \cite{Tao2016} proved  in 2016 the logarithmic average of  \eqref{Chowla_conjecture_eqn} over $n$  that 
\begin{equation}\label{Tao2016}
	\frac{1}{\log N}\sum_{n=1}^N \frac{\lambda(n)\lambda(n+1)}{n}=o_{N\to\infty}(1).
\end{equation}
The error term was improved to $O(\frac1{\sqrt{\log\log N}})$ by Helfgott and Radziwi\l\l{} \cite{HelfgottRadziwill2021} and later to $O(\frac1{(\log N)^c})$ by Pilatte \cite{Pilatte2026} for some constant $c>0$. Recently, the two-point Chowla's conjecture  is resolved by OpenAI \cite{OAI}, that is, we have
\begin{equation}\label{AI2026}
	\lim_{N\to\infty} \frac{1}{ N}\sum_{n=1}^N \lambda(n)\lambda(n+1)=0.
\end{equation}

 In 2024, Charamaras and Richter \cite{CharamarasRichter2025} generalized \eqref{Tao2016} to bounded functions over $\Omega(n)$. And they made a conjecture that generalizes  \eqref{Chowla_conjecture_eqn} as follows.
\begin{conjecture}[{\cite[Conjecture 1.10]{CharamarasRichter2025}}]
\label{conj_functional_chowla_str}
For any positive $k\in\N$ and any bounded $a\colon\N^k\to\C$, we have
$$
\frac1N\sum_{n=1}^N a\big(\Omega(n),\Omega(n+1),\dots,\Omega(n+k-1)\big)
= \frac{1}{N^{k}} \sum_{1\leq n_1,\dots, n_k\leq N}  a\big(\Omega(n_1),\dots,\Omega(n_k)\big) + \oh_{N\to\infty}(1).
$$
\end{conjecture}

In this article, we prove the case $k=2$ of Conjecture~\ref{conj_functional_chowla_str}.

\begin{theorem}\label{thm:main}
Let $a_1,a_2,b_1,b_2$ be fixed positive integers with
$a_1b_2-a_2b_1\ne0$. Uniformly for any bounded function
$a:\N^2\to\C$, as $N\to\infty$, we have
\begin{align}\label{eq:main-test}
 \frac1N\sum_{n\le N}
 a\bigl(\Omega(a_1n+b_1),\Omega(a_2n+b_2)\bigr)
 ={}&\frac1{N^2}\sum_{m,n\le N}a(\Omega(m),\Omega(n))\notag\\
 &+O_{a_1,a_2,b_1,b_2}\!\left(
 \norm a_\infty\frac{(\log\log\log N)^{5/2}}
                         {\sqrt{\log\log N}}\right).
\end{align}

\end{theorem}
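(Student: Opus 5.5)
The plan is to turn the statement into a total-variation estimate. Write $\mu_N(k,l)=\frac1N\#\{n\le N:\Omega(a_1n+b_1)=k,\ \Omega(a_2n+b_2)=l\}$ and $\nu_N(k)=\frac1N\#\{m\le N:\Omega(m)=k\}$. Then \eqref{eq:main-test}, uniformly over $\norm a_\infty\le 1$, is equivalent to
\[
\sum_{k,l}\bigl|\mu_N(k,l)-\nu_N(k)\nu_N(l)\bigr|\ll \frac{(\log\log\log N)^{5/2}}{\sqrt{\log\log N}}.
\]
Put $L=\log\log N$. By Erdős--Kac with Chebyshev-type tail bounds, we may restrict to $|k-L|,|l-L|\le A\sqrt{L\log L}$ for a large constant $A$, at cost $O(L^{-1})$. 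Inside this window the argument has two parts. First we show that $\mu_N$ and $\nu_N\otimes\nu_N$ agree after averaging over boxes of side $H$, with $H$ a small power of $\log L$ times $\sqrt L$ divided by a power of $\log L$. Second we show that both measures are nearly constant on such boxes.

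\emph{Coarse-scale agreement.} We use a bivariate Selberg--Delange/Kubilius computation of
\[
\frac1N\sum_{n\le N} z_1^{\Omega(a_1n+b_1)}z_2^{\Omega(a_2n+b_2)}\qquad\text{for } |z_i|=1,\ |z_i-1|\le \frac{C\log L}{\sqrt L}.
\]
Only a neighbourhood of $z=1$ is needed, so $z=-1$, which would be genuine Chowla, is never evaluated. Split $\Omega=\Omega_{\le y}+\Omega_{>y}$ with $y=N^{1/(L\log L)}$. For the primes $\le y$ we expand in divisors. For $p\nmid a_1a_2(a_1b_2-a_2b_1)$ the two local conditions are independent by CRT, and the finitely many bad primes contribute $O(1)$ bounded perturbations, hence $O(1/\sqrt L)$ in total variation. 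Primes in $(y,N]$ contribute $\log(L\log L)$ to the mean, which is $O(\log L)$ and is absorbed. The outcome is that the characteristic functions agree up to $O(\log L/\sqrt L)$ on this range. A smoothing inequality of Esseen type with window $H$ then gives agreement of the box masses.

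\emph{Local smoothness (the main obstacle).} Here we must show that
\[
\sum_{k,l}\bigl|\mu_N(k+1,l)-\mu_N(k,l)\bigr|\ll \frac{(\log L)^{3/2}}{\sqrt L},
\]
and similarly in $l$; the corresponding bound for $\nu_N$ is classical. Bounds on the characteristic function near $1$ alone do not give this, since a lattice obstruction, invisible there, could sit at $z=-1$. My approach is a prime-switching argument. Pick a dyadic range of primes $p\sim P$, with $\log\log P$ in a middle range, and compare
\[
\sum_{p\sim P}\ \sum_{\substack{n\le N\\ p\mid a_1n+b_1}} f\bigl(\Omega(a_1n+b_1)-1,\Omega(a_2n+b_2)\bigr)\quad\text{with}\quad \sum_{p\sim P}\frac1p\sum_{n\le N}f\bigl(\Omega(a_1n+b_1),\Omega(a_2n+b_2)\bigr)
\]
for bounded $f$. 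The comparison uses a Turán--Kubilius bound for the number of prime divisors in $[P,2P]$. It also needs equidistribution of $n$ in progressions mod $p$ that is compatible with the second form, and this holds since $p\nmid a_1b_2-a_2b_1$ for large $p$. Averaging over $O(\log L)$ dyadic scales and using Cauchy--Schwarz gives the shift-invariance with the stated loss. This step is where the extra $(\log\log\log N)$ factors enter: roughly $(\log L)^{1/2}$ from the tail window, $\log L$ from the number of scales, and the rest from the smoothing.

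\emph{Combining.} Smoothness at scale $1$ propagates to boxes of side $H$, at cost $H$ times the one-step bound. Add the coarse agreement, sum over the $O((\sqrt{L\log L}/H)^2)$ boxes, and optimise $H$. This should produce the error term $(\log\log\log N)^{5/2}/\sqrt{\log\log N}$.
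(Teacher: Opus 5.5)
There is a genuine gap. It sits in the step you flag as the main obstacle, and it undercuts your claim that $z=-1$ is never needed.

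The theorem itself contains the two-point Chowla conjecture. Take $a(k,l)=(-1)^{k+l}$, $a_1=a_2=1$, $b_1=1$, $b_2=2$. The main term is $\bigl(\frac1N\sum_{n\le N}\lambda(n)\bigr)^2=\oh(1)$, so $\frac1N\sum_{n\le N}\lambda(n+1)\lambda(n+2)$ must tend to $0$.

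Your local smoothness bound implies this directly. Since $\sum_{k,l}(-1)^{k+l}(\mu_N(k+1,l)-\mu_N(k,l))=-2\sum_{k,l}(-1)^{k+l}\mu_N(k,l)$, you get
\[
\Bigl|\frac1N\sum_{n\le N}\lambda(a_1n+b_1)\lambda(a_2n+b_2)\Bigr|\le\frac12\sum_{k,l}\bigl|\mu_N(k+1,l)-\mu_N(k,l)\bigr|.
\]
More generally, let $C_N^\ell(\theta,\phi)=\frac1N\sum_{n\le N}\ee^{i\theta\Omega(a_1n+b_1)+i\phi\Omega(a_2n+b_2)}$. Then $(\ee^{-i\theta}-1)C_N^\ell(\theta,\phi)$ is the Fourier transform of $\mu_N(k+1,l)-\mu_N(k,l)$. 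So your step is equivalent to a uniform bound $|C_N^\ell(\theta,\phi)|\ll(\log L)^{3/2}/(\sqrt L\,|\theta|)$ on the whole torus. The lattice obstruction at $z=-1$ is therefore not circumvented; it has been moved into this step.

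The prime-switching sketch cannot remove it. Writing $a_1n+b_1=pm$, you are left with the joint law of $\bigl(\Omega(m),\Omega((a_2pm+\delta)/a_1)\bigr)$, where $\delta=a_1b_2-a_2b_1$, averaged over $p\sim P$. This is a new two-point problem with a $p$-dependent second form, and you must show its law matches the unswitched one. Tur\'an--Kubilius and equidistribution of $n$ modulo $p$ are sieve-level inputs. They see only local divisibility, the same information as the characteristic function near $z=1$. They cannot decouple the weight $p\ind_{p\mid a_1n+b_1}$ from the parity of $\Omega$ on the second form. That decoupling is the whole difficulty: Tao's entropy decrement handles it only for logarithmic averages, and the Ces\`aro case needed the new argument of \cite{OAI}.

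The paper supplies exactly this input. It applies Parseval on the window $|k-L|,|j-L|\le\sqrt{L\log L}$ (Lemma~\ref{lem:fourier-reduction}). The region $|\theta|,|\phi|\le\rho=\sqrt{D\log L/L}$ is handled by a Kubilius model, much as in your coarse step (Proposition~\ref{prop:small-frequency}). Everything else is covered by Proposition~\ref{prop:progression} and Corollary~\ref{cor:high-frequency}. There the \cite{OAI} argument is rerun with arbitrary phases $f_\theta,f_\phi$, using the short exponential sum bound of Theorem~\ref{thm_MRT} and the uniform distance bound of Lemma~\ref{lem:distance}. This gives $|\Delta_N|\ll L^{-14}$ whenever $\max(|\theta|,|\phi|)\ge\rho$.

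Your combining step also needs information of this kind, so its numerology does not close as stated:
\begin{itemize}
\item Fourier control only for $|z_i-1|\le C\log L/\sqrt L$ resolves boxes of side $H\gtrsim\sqrt L/\log L$. Then $H$ times your one-step bound is $\gtrsim(\log L)^{1/2}$, which is not small.
\item To make $H(\log L)^{3/2}/\sqrt L$ of the target size you need $H\ll\log L$. That requires control of the joint characteristic function up to $|\theta|,|\phi|\approx1/\log L$.
\item In that range your truncation error $(|\theta|+|\phi|)\log L$ is of size $1$.
\end{itemize}
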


Let $(X,\nu,T)$ be a uniquely ergodic topological dynamical system.
Let $C(X)$ denote the continuous complex-valued functions on $X$. In 2022, Bergelson and Richter \cite[Theorem A]{BR} proved a dynamical generalization of \eqref{pnt_Liouville} that
\begin{equation}\label{eq:BR}
\lim_{N\to\infty} \frac1N\sum_{n\leq N}f(T^{\Omega(n)}x)=\int_X f\,d\nu
\end{equation}
for every $f\in C(X)$ and $x\in X$.  By \cite[Theorem 1.7]{Wang2026avg_1}, as a consequence of Theorem~\ref{thm:main}, we have the following dynamical analogue of \eqref{eq:BR} for the two-point Chowla conjecture \eqref{AI2026}.

\begin{theorem}\label{dynamical_chowla}
In a uniquely ergodic system $(X,\nu,T)$, we have
\begin{equation}
\lim_{N\to\infty} \frac1N\sum_{n\leq N}f(T^{\Omega(n)+\Omega(n+1)}x)=\int_X f\,d\nu
\end{equation}
for every $f\in C(X)$ and $x\in X$. 
\end{theorem}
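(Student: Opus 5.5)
Theorem~\ref{dynamical_chowla} follows from Theorem~\ref{thm:main} once we reduce the dynamical average to averages of functions of $(\Omega(n),\Omega(n+1))$. This is what \cite[Theorem 1.7]{Wang2026avg_1} provides, but the argument is short enough to sketch directly. Fix $f\in C(X)$ and $x\in X$. Apply Theorem~\ref{thm:main} with $a_1=a_2=b_1=1$, $b_2=2$; then $a_1b_2-a_2b_1=1\neq0$, and the two arguments are $\Omega(n+1)$ and $\Omega(n+2)$. For the bounded function $a(u,v)=f(T^{u+v}x)$, with $\norm a_\infty\le\norm f_\infty$, we get
\begin{equation*}
\frac1N\sum_{n\le N}f\bigl(T^{\Omega(n+1)+\Omega(n+2)}x\bigr)=\frac1{N^2}\sum_{m,n\le N}f\bigl(T^{\Omega(m)+\Omega(n)}x\bigr)+\oh_{N\to\infty}(1).
\end{equation*}
Shifting $n\mapsto n-1$ changes the left side by $O(\norm f_\infty/N)$, so it suffices to show that the double average on the right tends to $\int_X f\,d\nu$.

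For the double average, fix $m$ and set $y=T^{\Omega(m)}x$. The inner sum is
\begin{equation*}
\frac1N\sum_{n\le N}f\bigl(T^{\Omega(n)}y\bigr).
\end{equation*}
By the Bergelson--Richter theorem \eqref{eq:BR} this tends to $\int_X f\,d\nu$ for every $y$. We need this convergence uniformly in $y\in X$, because $y$ varies with $m$. The standard source of uniformity in uniquely ergodic systems is that Birkhoff averages $\frac1H\sum_{h<H}f(T^hy)$ converge to $\int f\,d\nu$ uniformly in $y$. The proof of \eqref{eq:BR} in \cite{BR} reduces the $\Omega$-average to such averages, up to errors depending only on $f$ and $N$, so it yields the uniform statement. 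If one prefers not to reopen that proof, a compactness argument works instead. Suppose uniformity fails. Then there are $\varepsilon>0$, points $y_j$, and $N_j\to\infty$ with the inner average at $(y_j,N_j)$ more than $\varepsilon$ from $\int f\,d\nu$. Pass to a subsequence with $y_j\to y$. Since $f$ is uniformly continuous but $T^{\Omega(n)}$ is not equicontinuous, this limit does not transfer directly. This is why I would rely on the uniform version obtained from the proof in \cite{BR}, i.e.\ exactly the input encoded in \cite[Theorem 1.7]{Wang2026avg_1}.

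Granting uniformity, let $\varepsilon_N=\sup_{y\in X}\bigl|\frac1N\sum_{n\le N}f(T^{\Omega(n)}y)-\int_X f\,d\nu\bigr|$, so that $\varepsilon_N\to0$. Then
\begin{equation*}
\Bigl|\frac1{N^2}\sum_{m,n\le N}f\bigl(T^{\Omega(m)+\Omega(n)}x\bigr)-\int_X f\,d\nu\Bigr|\le\frac1N\sum_{m\le N}\varepsilon_N=\varepsilon_N\to0,
\end{equation*}
which proves the claim. The main obstacle is the uniformity in $y$ of \eqref{eq:BR}; everything else is a direct substitution into Theorem~\ref{thm:main}, using that its error term is uniform over all bounded $a$, which is essential here because $a$ depends on $x$ and $f$.
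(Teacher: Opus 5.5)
Your proposal is correct and takes the paper's route. The paper combines Theorem~\ref{thm:main} with the transfer principle \cite[Theorem~1.7]{Wang2026avg_1}, and your test function $a(u,v)=f(T^{u+v}x)$, followed by evaluating the independent double average with a base-point-uniform form of \eqref{eq:BR}, unpacks that transfer step. The uniformity you rely on does hold: it follows from $\sum_k|q_N(k)-q_N(k-1)|\to0$ together with uniform convergence of Birkhoff averages in uniquely ergodic systems. Also, uniformity of Theorem~\ref{thm:main} over $a$ is not actually needed, since $a$ is fixed once $f$ and $x$ are.
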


Theorem~\ref{dynamical_chowla} also holds for $\Omega(a_1n+b_1)+\Omega(a_2n+b_2)$ with $a_1b_2-a_2b_1\ne0$. To prove Theorem~\ref{thm:main}, we will use a Fourier analytic method of Qi and Zheng \cite{QiZheng2026}, which is used to establish a variant of \eqref{eq:BR} for irreducible cubic forms.  We prove a stronger $\ell^1$ estimate \eqref{eq:main-tv} for the
local densities defined below. Then we divide  the square integral into two parts according to the frequencies. For the large frequencies, we will use a 
uniform pretentious-distance estimate in \cite{Wang} and the results in \cite{OAI}. For the small frequencies, we will estimate the integral directly.

\section{Fourier reduction}
\label{sec:fourier}

For positive integers $a_1,a_2,b_1,b_2$  with
$a_1b_2-a_2b_1\ne0$, we write
\[
 \ell_i(n)=a_in+b_i \, (i=1,2),\qquad
 \delta=a_1b_2-a_2b_1\ne0,\qquad L=\log\log N.
\]
For $k,j\in\N$, we define the local densities
\begin{align}
 p_N^\ell(k,j)
 &=\frac1N\#\{n\le N:\Omega(\ell_1(n))=k,
                         \ \Omega(\ell_2(n))=j\},\label{eq:joint}\\
 q_N(k)&=\frac1N\#\{n\le N:\Omega(n)=k\},\label{eq:marginal}\\
 d_N(k,j)&=p_N^\ell(k,j)-q_N(k)q_N(j).\notag
\end{align}
Extend these functions by zero to negative indices.
By \cite[Conjecture 1.9]{CharamarasRichter2025}, to prove Theorem~\ref{thm:main}, it suffices to prove  the following estimate on the total variation of $d_N(k,j)$.  

\begin{theorem}\label{thm_main_goal}
We have that
\begin{equation}\label{eq:main-tv}
 \sum_{k,j\in\Z}|d_N(k,j)|
 \ll_{\ell_1,\ell_2}\frac{(\log L)^{5/2}}{\sqrt L}.
\end{equation}
\end{theorem}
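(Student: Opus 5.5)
We bound the $\ell^1$ norm of $d_N$ through its two-dimensional Fourier transform, following the strategy of Qi and Zheng \cite{QiZheng2026}. For $\theta=(\theta_1,\theta_2)\in[0,1)^2$ put
\[
 \widehat d_N(\theta)=\sum_{k,j}d_N(k,j)\ee(k\theta_1+j\theta_2)
 =\frac1N\sum_{n\le N}\ee\bigl(\theta_1\Omega(\ell_1(n))+\theta_2\Omega(\ell_2(n))\bigr)-S_N(\theta_1)S_N(\theta_2),
\]
where $S_N(\theta)=\frac1N\sum_{n\le N}\ee(\theta\Omega(n))$. Both $p_N^\ell$ and $q_N\otimes q_N$ are essentially supported on the box $B=\{|k-L|,|j-L|\le C\sqrt{L\log L}\}$. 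Indeed, by Hardy--Ramanujan/Erd\H{o}s--Kac type tail bounds (Sathe--Selberg), the mass outside $B$ is $\ll (\log L)^{-A}$, up to the contribution of $\ell_i(n)$ with large smooth parts, which we also handle by the tail bound. By Cauchy--Schwarz and Parseval,
\[
 \sum_{(k,j)\in B}|d_N(k,j)|\le |B|^{1/2}\Bigl(\int_{[0,1]^2}|\widehat d_N(\theta)|^2\dd\theta\Bigr)^{1/2},\qquad |B|\asymp L\log L .
\]
The goal is therefore $\int|\widehat d_N|^2\ll (\log L)^4/L^2$, i.e.\ $|\widehat d_N|$ of size about $L^{-1}(\log L)^2$ in mean square. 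This is the source of the shape $(\log L)^{5/2}/\sqrt L$.

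Next we split the frequency square. For \emph{large} frequencies, say $\|\theta_1\|+\|\theta_2\|\ge \eta:=L^{-1/2}(\log L)^{c}$, note that $n\mapsto \ee(\theta\Omega(n))$ is completely multiplicative. By Hal\'asz's theorem with the uniform pretentious-distance estimate of \cite{Wang}, its pretentious distance to every $n^{it}\chi$ is $\gg \|\theta\|^2L$. Hence $|S_N(\theta)|\ll \exp(-c\|\theta\|^2L)$, and the product term is small. The correlation term is a two-point correlation of multiplicative functions along $\ell_1,\ell_2$. Here we invoke the (quantitative form of the) argument in \cite{OAI}, which we expect to bound this correlation by a power of the pretentious distance, giving $\ll \exp(-c'\|\theta\|^2L)$ for at least one nonpretentious factor. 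Integrating, the large-frequency region contributes $\ll L^{-A}$ to $\int|\widehat d_N|^2$. We first need to check that the \cite{OAI} input applies uniformly in the twist $\ee(\theta\Omega)$ with an explicit dependence on the distance. This step is the \textbf{main obstacle}: the argument must be uniform in $\theta$ down to the scale $\|\theta\|\approx L^{-1/2}$, and if \cite{OAI} only gives $o(1)$ it must be quantified there.

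For \emph{small} frequencies $\|\theta_i\|\le\eta$ we compute directly. Write $z_i=\ee(\theta_i)$, and use the Selberg--Delange method for the joint function $z_1^{\Omega(\ell_1(n))}z_2^{\Omega(\ell_2(n))}$. Equivalently, use a sieve/Kubilius model: truncate $\Omega$ to primes $p\le N^{1/(\log L)^2}$, and use that $\ell_1(n),\ell_2(n)$ share prime factors only among $p\mid\delta$. This gives
\[
 \frac1N\sum_{n\le N}z_1^{\Omega(\ell_1 n)}z_2^{\Omega(\ell_2 n)}=\prod_p E_p(z_1,z_2)\,(1+O((\log L)^{c}/L)),
\]
with local factors that factor as $E_p(z_1)E_p(z_2)$ up to $O(|z_1-1||z_2-1|/p^2)$ for $p\nmid\delta a_1a_2$. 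The same formula holds for $S_N(\theta_1)S_N(\theta_2)$. Comparing the two, the main terms are both $\approx \ee(L(\theta_1+\theta_2))\exp(-2\pi^2L(\theta_1^2+\theta_2^2))$ times bounded corrections. The difference is $\ll (|\theta_1|+|\theta_2|)\exp(-cL\|\theta\|^2)+(\log L)^{c}/L$, where the first term comes from the finitely many bad primes and the $\log\log$ normalisation. Integrating the square over $\|\theta\|\le\eta$ gives $\ll (\log L)^{4}/L^{2}$. Combining this with the tail bound outside $B$ and the large-frequency estimate yields \eqref{eq:main-tv}.
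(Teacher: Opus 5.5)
\textbf{Verdict: genuine gap.} Your overall architecture matches the paper's. Both use Cauchy--Schwarz and Parseval on a box of half-width $\asymp\sqrt{L\log L}$, a frequency cutoff at $\rho\asymp\sqrt{\log L/L}$, Wang's uniform distance bound plus \cite{OAI} at high frequency, and a truncated Kubilius model at low frequency.

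The gap is the step you flag yourself. The high-frequency bound for the correlation term $C_N^\ell(\theta,\phi)$, uniform in $(\theta,\phi)$, is only ``expected'', yet this is where almost all of the paper's work lies (Proposition~\ref{prop:progression}, via Lemmas~\ref{lem:rough-shifts} and~\ref{lem:centering-reduction}). Three things are needed.
\begin{enumerate}
\item The dilation identity \eqref{eq:affine-dilation}. It turns $f_\theta(\ell_1(n))f_\phi(\ell_2(n))$ into $f_{\theta'}(m)f_{\phi'}(m+h)$ with $m\equiv c_0\pmod{a_1a_2}$ and $h=|\delta|$.
\item A modification of the \cite{OAI} weights. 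For $\lambda$ one has $\lambda(vd)^2=1$, but $f_\theta(vd)f_\phi(vd)\ne 1$. So each pair $(d,v)$ must carry the compensating phase $\xi(vd)=\ee^{-i(\theta+\phi)\Omega(vd)}$. With it, the $z=1$ term of $\prod_{p\mid d}(\ind_{p\mid n}-1/p)$ reproduces $S_0F_N^{h,q,r}(\theta,\phi)$ with positive weight $S_0\ge SV_*/2$. The $z>1$ terms become rough-shift averages with unimodular coefficients $c_z=\xi(z)$.
\item A bound for these rough-shift averages. It combines the \cite{OAI} Fourier bounds for $B(t)=\sum_z c_z z^{-1}\ee^{2\pi i hzt}$ with Theorem~\ref{thm_MRT}, applied only to the factor with the larger phase $\theta_*$. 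For that factor Lemma~\ref{lem:distance} gives $\Mpret(f_{\theta_*};Y,Q)\ge c\theta_*^2L-C$.
\end{enumerate}
The result is not a bound in terms of the distance alone. It is $G^{-b}+G\exp\{-c\max(\theta^2,\phi^2)L\}$ for any $G\ge G_0$ with $\log N\ge G^{A_0}$. Taking $G=L^B$ gives $L^{-14}$ on $\max(|\theta|,|\phi|)\ge\rho$. Optimizing $G$ would essentially recover your $\exp(-c'\max(\theta^2,\phi^2)L)$, but $L^{-14}$ is all that is needed.

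\textbf{Two smaller corrections.}

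\emph{Tail bound.} The mass outside your box must be $\ll L^{-1/2}$. A bound of $(\log L)^{-A}$, as you wrote, would swamp the target $(\log L)^{5/2}/\sqrt L$. The exponential-moment (Sathe--Selberg type) bound does give $L^{-C^2/2}$ at half-width $C\sqrt{L\log L}$; compare Corollary~\ref{cor:affine-tail}.

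\emph{Low frequencies.} The function $n\mapsto z_1^{\Omega(\ell_1(n))}z_2^{\Omega(\ell_2(n))}$ is not multiplicative, so Selberg--Delange is not available for it. The truncated model also does not give a multiplicative $1+O((\log L)^c/L)$ error. What it gives (Proposition~\ref{prop:small-frequency}) is:
\begin{itemize}
\item an additive loss $(|\theta|+|\phi|)\sum_{y<p\le N}1/p$ from the primes in $(y,N]$ and the finitely many bad primes, with no Gaussian factor;
\item plus $O(|\theta\phi|)$ from $|J_p-P_p|\le|\theta\phi|/p^2$.
\end{itemize}
This cruder bound suffices. Its square integrated over $[-\rho,\rho]^2$ is $\ll\rho^4(\log L)^2\asymp(\log L)^4/L^2$, which is exactly the source of the exponent $5/2$.

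One further caution on the truncation level. If you compare with the model by matching moments up to order $m=\lceil L\rceil$, as the paper does, you need $y=N^{1/L^2}$ so that $(2y)^m/N\le N^{-1/2}$. With your $y=N^{1/(\log L)^2}$ that fails. You would instead need the fundamental-lemma (total variation) form of the Kubilius model, which with $u=(\log L)^2$ gives an error $\exp(-cu\log u)$.
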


For $\theta,\phi\in[-\pi,\pi]$, let
\begin{align}
 f_\theta(n)&=\exp(i\theta\Omega(n)),\notag\\
 A_N(\theta)&=\frac1N\sum_{n\le N}f_\theta(n),\notag\\
 C_N^\ell(\theta,\phi)
 &=\frac1N\sum_{n\le N}f_\theta(\ell_1(n))f_\phi(\ell_2(n)),\notag\\
 \Delta_N(\theta,\phi)
 &=C_N^\ell(\theta,\phi)-A_N(\theta)A_N(\phi).
 \label{eq:delta}
\end{align}

The Fourier transform $\widehat d_N$ of $d_N$ is given by
\[
\widehat d_N(\theta,\phi)
=\sum_{k,j\in\mathbb Z}d_N(k,j)e^{i(k\theta+j\phi)}.
\]
By the definition of $p_N^\ell(k,j)$,
\[
\sum_{k,j\in\mathbb Z}p_N^\ell(k,j)e^{i(k\theta+j\phi)}=\frac1N\sum_{n\le N} e^{i\theta\Omega(\ell_1(n))+i\phi\Omega(\ell_2(n))}=C_N^\ell(\theta,\phi).
\]
Similarly, 
\[
\sum_{k\in\mathbb Z}q_N(k)e^{ik\theta}
=\frac1N\sum_{n\le N}e^{i\theta\Omega(n)}
=A_N(\theta).
\]
Then
\[
\sum_{k,j\in\mathbb Z}q_N(k)q_N(j)e^{i(k\theta+j\phi)}=\left(\sum_{k\in\mathbb Z}q_N(k)e^{ik\theta}\right)
  \left(\sum_{j\in\mathbb Z}q_N(j)e^{ij\phi}\right)=A_N(\theta)A_N(\phi).
\]
It follows that
\[
\widehat d_N(\theta,\phi)
=C_N^\ell(\theta,\phi)-A_N(\theta)A_N(\phi)
=\Delta_N(\theta,\phi).
\]
That is, $\Delta_N$ is the Fourier transform of $d_N$. By Parseval's formula,
\begin{equation}\label{eq:parseval}
 \sum_{k,j\in\Z}|d_N(k,j)|^2
 =\frac1{(2\pi)^2}\int_{[-\pi,\pi]^2}
 |\Delta_N(\theta,\phi)|^2\dd\theta\dd\phi.
\end{equation}

For $R\ge1$ and $L=\log\log N$, put
\begin{align*}
 I_R&=\{k\in\Z:|k-L|\le R\},\\
 \tau_N(R)&=\frac1N\#\{n\le N:|\Omega(n)-L|>R\},\\
 \tau_{N,i}^\ell(R)
 &=\frac1N\#\{n\le N:|\Omega(\ell_i(n))-L|>R\}\quad(i=1,2).
\end{align*}

\begin{lemma}\label{lem:fourier-reduction}
Define  the $L^2$ norm
$$\|\Delta_N\|_{L^2([-\pi,\pi]^2)}:= \Big(\int_{[-\pi,\pi]^2}
 |\Delta_N(\theta,\phi)|^2\dd\theta\dd\phi\Big)^{\frac12}.$$ Then for every $R\ge1$, we have
\begin{equation}\label{eq:fourier-reduction}
 \sum_{k,j}|d_N(k,j)|
 \ll R\norm{\Delta_N}_{L^2([-\pi,\pi]^2)}
       +\tau_N(R)+\tau_{N,1}^\ell(R)+\tau_{N,2}^\ell(R).
\end{equation}
\end{lemma}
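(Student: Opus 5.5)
Split the index set $\Z^2$ into the box $B=I_R\times I_R$ and its complement. On $B$ we use Cauchy--Schwarz and Parseval; off $B$ we use the trivial bound $|d_N|\le p_N^\ell+q_N\otimes q_N$ and control the mass of each piece by the tail quantities $\tau$.

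\emph{Box part.} Since $\#I_R\le 2R+1\ll R$, Cauchy--Schwarz gives
\[
\sum_{(k,j)\in B}|d_N(k,j)|\le (\#B)^{1/2}\Bigl(\sum_{k,j}|d_N(k,j)|^2\Bigr)^{1/2}\ll R\Bigl(\sum_{k,j}|d_N(k,j)|^2\Bigr)^{1/2}.
\]
By Parseval \eqref{eq:parseval}, the last factor equals $(2\pi)^{-1}\norm{\Delta_N}_{L^2([-\pi,\pi]^2)}$. Hence the box contributes $\ll R\norm{\Delta_N}_{L^2}$.

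\emph{Complement.} If $(k,j)\notin B$, then $k\notin I_R$ or $j\notin I_R$. We bound $|d_N(k,j)|\le p_N^\ell(k,j)+q_N(k)q_N(j)$ and sum each term over $\Z^2\setminus B$.

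For the joint term, the sum of $p_N^\ell(k,j)$ over $k\notin I_R$ is $\frac1N\#\{n\le N:|\Omega(\ell_1(n))-L|>R\}=\tau_{N,1}^\ell(R)$. Similarly, the sum over $j\notin I_R$ is $\tau_{N,2}^\ell(R)$. Both identities hold because $p_N^\ell$ is the distribution of $(\Omega(\ell_1(n)),\Omega(\ell_2(n)))$ for $n$ uniform in $[1,N]$, so its total mass is $1$.

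For the product term, $\sum_k q_N(k)=1$, because every $n\le N$ has some value of $\Omega(n)$. Therefore
\[
\sum_{(k,j)\notin B}q_N(k)q_N(j)\le \sum_{k\notin I_R}q_N(k)+\sum_{j\notin I_R}q_N(j)=2\tau_N(R).
\]
The zero extension to negative indices contributes nothing, and $k\notin I_R$ covers them automatically.

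Adding the two parts gives \eqref{eq:fourier-reduction}. The only step needing care is using Parseval on the whole of $\Z^2$: this requires $d_N\in\ell^2(\Z^2)$. That holds because $d_N$ is finitely supported, since $\Omega(m)\ll\log m$ and $\ell_i(n)\ll N$. With that, the argument is routine and I expect no real obstacle.
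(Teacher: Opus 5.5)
Your proposal is correct and follows the paper's proof essentially step for step. You split off the box $I_R^2$, apply Cauchy--Schwarz with $\#I_R\ll R$ and then Parseval, and bound the complement by $\tau_{N,1}^\ell(R)+\tau_{N,2}^\ell(R)+2\tau_N(R)$ via $|d_N|\le p_N^\ell+q_N(k)q_N(j)$; your added remark that $d_N$ is finitely supported, so Parseval applies, is a harmless extra justification.
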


\begin{proof}
We split the sum into two parts:
\[
\sum_{k,j\in\mathbb Z}|d_N(k,j)|
=
\sum_{(k,j)\in I_R^2}|d_N(k,j)|
+
\sum_{(k,j)\notin I_R^2}|d_N(k,j)|.
\]

If $(k,j)\in I_R^2$, then by $|I_R|\le 2R+1\le3R$, the Cauchy--Schwarz inequality gives
\[
\sum_{(k,j)\in I_R^2}|d_N(k,j)| \le |I_R^2|^{1/2}
   \left(\sum_{(k,j)\in I_R^2}|d_N(k,j)|^2\right)^{1/2}\le 3R
   \left(\sum_{k,j\in\mathbb Z}|d_N(k,j)|^2\right)^{1/2}.
\]
It follows by \eqref{eq:parseval} that
\begin{equation}\label{parseval_central}
\sum_{(k,j)\in I_R^2}|d_N(k,j)|
\le
\frac{3R}{2\pi}
\|\Delta_N\|_{L^2([-\pi,\pi]^2)}.
\end{equation}

If $(k,j)\notin I_R^2$, then $k\notin I_R$ or $ j\notin I_R$. So
\[
\begin{aligned}
\sum_{(k,j)\notin I_R^2}p_N^\ell(k,j)
&\le
\sum_{k\notin I_R}\sum_j p_N^\ell(k,j)
+
\sum_{j\notin I_R}\sum_k p_N^\ell(k,j)\\
&=
\frac1N\#\{n\le N:|\Omega(\ell_1(n))-L|>R\}\\
&\quad+
\frac1N\#\{n\le N:|\Omega(\ell_2(n))-L|>R\}\\
&=\tau_{N,1}^\ell(R)+\tau_{N,2}^\ell(R).
\end{aligned}
\]
Similarly, using $\sum_k q_N(k)=1$, we have
\[
\sum_{(k,j)\notin I_R^2}q_N(k)q_N(j) 
\le
\left(\sum_{k\notin I_R}q_N(k)\right)
\left(\sum_jq_N(j)\right)+
\left(\sum_kq_N(k)\right)
\left(\sum_{j\notin I_R}q_N(j)\right)=2\tau_N(R).
\]

By the triangle inequality,
\begin{equation}\label{parseval_tail}
\sum_{(k,j)\notin I_R^2}|d_N(k,j)| \leq 2\tau_N(R)
+\tau_{N,1}^\ell(R)+\tau_{N,2}^\ell(R).
\end{equation}

Thus, combining \eqref{parseval_central} and \eqref{parseval_tail} yields
\[
\sum_{k,j\in\mathbb Z}|d_N(k,j)|
\le
\frac{3R}{2\pi}\|\Delta_N\|_{L^2([-\pi,\pi]^2)}
+2\tau_N(R)
+\tau_{N,1}^\ell(R)+\tau_{N,2}^\ell(R).
\]
This implies \eqref{eq:fourier-reduction}, with an absolute
implied constant.
\end{proof}

\section{Uniform correlations}

For $1$-bounded multiplicative functions $f,g$, define
\[
 \Dpret(f,g;Y)^2=\sum_{p\le Y}\frac{1-\Rea(f(p)\overline{g(p)})}{p},
\]
and put
\[
 \Mpret(f;Y,Q)=
 \inf_{\substack{q\le Q,\ \chi\bmod q\\ |t|\le Y}}
 \Dpret(f,\chi n^{it};Y)^2.
\]
Here the infimum ranges over all Dirichlet characters of the indicated
moduli, including principal and imprimitive characters. Then by \cite[Lemma~2.2]{Wang}, we have a uniform distance estimate for  $\Mpret(f_\theta;Y,Q)$.

\begin{lemma}[{\cite[Lemma~2.2]{Wang}}]\label{lem:distance}
There are absolute constants $c,C>0$ such that, for all sufficiently
large $Y$, all $\theta\in[-\pi,\pi]$, and
$1\le Q\le(\log Y)^{1/125}$,
\begin{equation}\label{eq:distance}
 \Mpret(f_\theta;Y,Q)\ge c\theta^2\log\log Y-C.
\end{equation}
\end{lemma}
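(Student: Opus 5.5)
The plan is to bound, for each fixed Dirichlet character $\chi\bmod q$ with $q\le Q$ and each $|t|\le Y$, the distance $\Dpret(f_\theta,\chi n^{it};Y)^2$ from below. Since $f_\theta(p)=e^{i\theta}$ for every prime $p$, we have
\[
\Dpret(f_\theta,\chi n^{it};Y)^2=\sum_{p\le Y}\frac{1-\Rea\bigl(e^{i\theta}\overline{\chi(p)}p^{-it}\bigr)}{p}.
\]
We drop the primes dividing $q$; they contribute at most $\sum_{p\mid q}1/p\ll\log\log\log Q$, which is $O(1)$ in the range $Q\le(\log Y)^{1/125}$ up to negligible loss (or simply note they only help, since the summands are nonnegative). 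We then split the primes $p\nmid q$ according to the residue class $a\bmod q$. On each class $\chi(p)=\chi(a)=e^{2\pi i r_a}$ is constant, a $q$-th root of unity (or at worst an $\varphi(q)$-th root).

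The core step is the following dichotomy in $t$. If $|t|\ge 1/\log Y$ is not too small, the Vinogradov--Korobov zero-free region for $L(s,\chi)$ (including Siegel issues, which is why $Q$ is so small) gives
\[
\sum_{p\le Y}\frac{\Rea(\psi(p)p^{-it})}{p}\le \log\log Y-\tfrac12\log\log Y+O(1)
\]
for any character $\psi$ modulo $q$. Equivalently, the distance is $\gg\log\log Y$, which dominates $c\theta^2\log\log Y$ since $\theta^2\le\pi^2$. This is the standard bound $\Dpret(1,\psi n^{it};Y)^2\ge(1/3+o(1))\log\log Y$-type estimate, for $|t|\ge1$ or so, uniform in $|t|\le Y$. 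More carefully, I would use the uniform estimate
\[
\sum_{p\le Y}\frac{\chi(p)p^{-it}}{p}=\ind_{\chi\text{ principal}}\log\min\bigl(\log Y,1/|t|\bigr)+O(\log\log(q+|t|+2))
\]
valid for $|t|\le Y$ and $q\le Q$. This follows from the prime number theorem in arithmetic progressions with the de la Vallée-Poussin/Vinogradov zero-free region and Siegel's theorem (ineffective, but the constants are absolute). The exceptional zero is harmless because $q\le(\log Y)^{1/125}$ forces $\beta\le1-c(\epsilon)q^{-\epsilon}$, far from $1$ on the scale $1/\log Y$.

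Inserting this into the identity above, for nonprincipal $\chi$ the distance equals $\log\log Y-O(\log\log\log Y)$ after twisting by $e^{i\theta}$, since $e^{i\theta}$ is a constant. This gives $\ge c\theta^2\log\log Y-C$ once we absorb $O(\log\log\log Y)$. Here is the snag: the claim has an absolute $C$, so I would instead use the bound $O(\log\log(q+3))+O(1)$ only for $|t|\le\exp((\log Y)^{1/2})$ and handle $q$ via $\log\log q\le\log\log\log Y$. I expect the actual proof in \cite{Wang} to use the sharper form where nonprincipal characters and large $|t|$ give $\Rea$-sum bounded by $(1-\eta)\log\log Y+O(1)$ with absolute $\eta$, via Halász-type lower bounds $|L(1+1/\log Y+it,\chi)|$. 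This is the main obstacle, and I would try the Montgomery--Vaughan/Granville--Soundararajan argument, which bounds $\sum_p\Rea(\chi(p)p^{-it})/p\le\frac{\sqrt3}{2}\cdot$ main term via the $\chi^2,\chi^3$ trick, which is absolute.

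For principal $\chi$ and $|t|\le1/\log Y$, the distance is $\sum_{p\le Y}(1-\cos(\theta-t\log p))/p$. Since $|t\log p|\le1$, this is at least $(1-\cos\theta)\log\log Y-O(1)$ on the primes where $t\log p$ is small, giving $\ge\frac{2}{\pi^2}\theta^2\log\log Y-C$. In the intermediate range $1/\log Y\le|t|\le1$, the primes $p\le\exp(1/|t|)$ give $(1-\cos\theta)\log(1/|t|)$, and the primes above contribute $\log(|t|\log Y)-O(1)$ because $p^{-it}$ oscillates there (PNT with the integral of $\cos(\theta-u)/u$). Summing these gives at least $\min(1,1-\cos\theta)\log\log Y-C$. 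Taking the infimum over all cases completes the proof of Lemma~\ref{lem:distance}.
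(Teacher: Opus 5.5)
\textbf{There is a genuine gap, in the regime where $\chi$ is nonprincipal or $|t|\ge1$.} The paper gives no proof of this lemma; it is quoted from \cite[Lemma~2.2]{Wang}, so your argument has to stand on its own. Your computation for principal $\chi$ with $|t|\le1$ is fine.

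Write $\Sigma_{\chi,t}=\sum_{p\le Y}\chi(p)p^{-1-it}$, so that $\Dpret(f_\theta,\chi n^{it};Y)^2=\sum_{p\le Y}p^{-1}-\Rea(\ee^{i\theta}\Sigma_{\bar\chi,t})$. Because $\ee^{i\theta}$ is an arbitrary unimodular constant, you need a bound on the \emph{modulus}: $|\Sigma_{\chi,t}|\le(1-\eta)\log\log Y+O(1)$ with an absolute $\eta>0$. This must hold uniformly for nonprincipal $\chi$ with $q\le(\log Y)^{1/125}$ and $|t|\le Y$, and for principal $\chi$ with $1\le|t|\le Y$. Your proposal does not supply this, for three reasons.
\begin{itemize}
\item Your displayed ``uniform estimate'' has error $O(\log\log(q+|t|+2))$ with an unspecified constant. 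Once $|t|$ is a power of $Y$, this is of the same order as $\sum_{p\le Y}1/p$ and yields nothing.
\item Restricting to $|t|\le\exp((\log Y)^{1/2})$ leaves the range up to $|t|=Y$ uncovered. Even inside that range the error is not $O(\log\log(q+3))+O(1)$, since $\log\zeta(1+it)$ is unbounded. Likewise, ``distance $=\log\log Y-O(\log\log\log Y)$'' for nonprincipal $\chi$ and large $|t|$ is not known unconditionally.
\item The $\chi^2,\chi^3$ device you fall back on does not apply. It exploits $f^2\equiv1$ on primes (as for $\lambda$), whereas $f_\theta^k=f_{k\theta}$ is not trivial for general $\theta$. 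Moreover, upper bounds for $\Rea\Sigma_{\psi,t}$ over all characters $\psi$ say nothing about $\Rea(\ee^{i\theta}\Sigma_{\bar\chi,t})$: you need control of $\arg L(1+1/\log Y+it,\bar\chi)$ as well as of $|L|$.
\end{itemize}
The ``snag'' you identify is not the real issue. Once a bound $(1-\eta)\log\log Y$ is available, any extra loss of size $O(\log\log\log Y)$ is absorbed by taking $c<\eta/\pi^2$.

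\textbf{How to fill the gap: truncate the prime sum.} Put $z=\exp((\log Y)^{3/4})$. Trivially $|\sum_{p\le z}\chi(p)p^{-1-it}|\le\frac34\log\log Y+O(1)$. The tail $\sum_{z<p\le Y}\chi(p)p^{-1-it}$ is $O(1)$ uniformly in the stated ranges, by partial summation from the truncated explicit formula for $\sum_{n\le x}\Lambda(n)\chi(n)n^{-it}$:
\begin{itemize}
\item The relevant zeros have $|\gamma|\ll Y$, so the Vinogradov--Korobov region gives $1-\beta\gg(\log Y)^{-2/3-\epsilon}$. Hence $x^{\beta-1}\le\exp(-c(\log Y)^{1/12-\epsilon})$ for $x\ge z$, which beats the polylogarithmic losses from summing over zeros.
\item A possible exceptional zero is disposed of by Siegel's theorem, as you indicate.
\item For principal $\chi$ with $|t|\ge1$, the pole contributes $\int_{\log z}^{\log Y}\ee^{-itv}v^{-1}\dd v=O(1)$.
\end{itemize}
This gives $\Dpret(f_\theta,\chi n^{it};Y)^2\ge\frac14\log\log Y-O(1)$ outside the range ($\chi$ principal, $|t|\le1$). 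Your constant $\frac12$ is not attainable this way; the optimal cut-off gives $\frac13-\epsilon$. Combined with your small-$|t|$ computation, the lemma follows with, e.g., $c=1/(4\pi^2)$.
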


We will also use the short exponential-sum estimate proved in \cite[Theorem~1.7]{MRT}. 

\begin{theorem}[{\cite[Theorem~1.7]{MRT}}] \label{thm_MRT}
If $Y\ge H\ge10$ and $f$ is a $1$-bounded
multiplicative function, then
\begin{align}\label{eq:mrt}
 \sup_{\theta\in\mathbb R}\int_0^Y
 \left|\sum_{x<m\le x+H}f(m)\ee^{2\pi i\theta m}\right|\dd x
 \ll YH\left(
 \ee^{-\Mpret(f;Y,Q_H)/20}
 +\frac{\log\log H}{\log H}
 +(\log Y)^{-1/700}\right),
\end{align}
where $Q_H=\min\{(\log Y)^{1/125},(\log H)^5\}$.
\end{theorem}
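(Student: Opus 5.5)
The plan is the Matomäki--Radziwiłł--Tao strategy. Fix $x$ and pick a maximizing frequency $\theta_x$, so that the left-hand side of \eqref{eq:mrt} becomes $\int_0^Y|\sum_{x<m\le x+H}f(m)\ee^{2\pi i\theta_x m}|\dd x$ with a measurable choice $x\mapsto\theta_x$. It then suffices to show that the set of $x$ where this sum exceeds $\eta H$ has measure $\ll Y\eta^{-O(1)}\cdot(\text{error})$, with $\eta$ comparable to the right-hand side. The first step is a Ramaré/Turán--Kubilius identity. For primes $p$ in a range $[P_1,Q_1]$ with $P_1$ a power of $\log H$ and $Q_1\le H^{\oh(1)}$, one writes
\[
f(m)\approx\frac1{\omega_{[P_1,Q_1]}(m)}\sum_{\substack{p\mid m\\ P_1\le p\le Q_1}} f(p)f(m/p)
\]
for all $m$ outside an exceptional set of density $\ll \log\log H/\log H$. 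The $\frac{\log\log H}{\log H}$ term in \eqref{eq:mrt} comes from exactly this exceptional set. After dyadic decomposition in $p$, the short sum is turned into bilinear (type II) sums $\sum_{p\sim P}\sum_{n} f(p)f(n)\ee(\theta_x pn)$, with $n$ in an interval of length $H/P$ near $x/p$.

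The second step, which I expect to be the main obstacle, is the minor-arc/structure argument. Suppose a bilinear sum is large for $x$ in a set $E$ of large measure. Cauchy--Schwarz in $n$ then produces many pairs of primes $p,p'\sim P$ for which the phases $p\theta_x$ and $p'\theta_{x'}$ are correlated, with $x'\approx xp'/p$. A large sieve type bound, using that $\sum_n b_n\ee(\alpha n)$ over a short interval is large only for $\alpha$ in few $\asymp P/H$-separated classes, then limits the number of compatible frequencies. The key point is combinatorial: one shows that the frequencies $\theta_x$ must cluster near $a/q+T/x$ with $q\le Q_H$ and $|T|\ll Y$. The idea is to iterate the relation $p\theta_x\approx p'\theta_{x'}$ across several scales (a Freiman/additive-combinatorics input on the resulting "approximate homomorphism" between $x$ and $\theta_x$). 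All other $x$ contribute at most the $(\log Y)^{-1/700}$ loss. The restriction $Q_H=\min\{(\log Y)^{1/125},(\log H)^5\}$ reflects how large $q$ this argument can tolerate.

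The third step treats the major arcs $\theta_x=a/q+T/x+\text{small}$. Here one expands $\ee(am/q)$ into Dirichlet characters $\chi\bmod q'$ with $q'\mid q$, at a cost polynomial in $q\le Q_H$. On $[x,x+H]$ one replaces $\ee(Tm/x)$ by $m^{it}$ with $t\approx 2\pi T$, up to a harmless Taylor error since $H\le Y$. This reduces matters to short sums $\sum_{x<m\le x+H}f(m)\bar\chi(m)m^{-it}$ with $|t|\le Y$. The Matomäki--Radziwiłł theorem in its twisted, quantitative form (with $\chi$ and $t$ chosen as the pretentious minimizers) bounds their average over $x$ by $YH(\ee^{-\Mpret(f;Y,Q_H)/20}+(\log H)^{-c})$. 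For this one runs the Halász-type mean value estimate for the long sums of $f\bar\chi n^{-it}$ with the distance $\Mpret$, together with the Matomäki--Radziwiłł comparison of short and long averages. Collecting the three sources of error gives \eqref{eq:mrt}.
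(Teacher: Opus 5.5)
The paper does not reprove this estimate; it quotes it from Matom\"aki--Radziwi\l\l{}--Tao. Your proposal has a genuine gap, and it comes from the order of quantifiers. In \eqref{eq:mrt} the supremum is \emph{outside} the integral, so one frequency $\theta$ is used for every $x$. Your first step replaces the left-hand side by $\int_0^Y\sup_\theta|\sum_{x<m\le x+H}f(m)\ee^{2\pi i\theta m}|\dd x$, with an $x$-dependent $\theta_x$. That quantity is not equal to the left-hand side; it is only an upper bound for it. Bounding it is the local Fourier uniformity problem.

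For $H$ as small as the theorem allows, that problem is not solved. The theorem allows any $H\ge10$, and in this paper's application $\log H\asymp G$ while $\log Y\ge G^{A_0}$. Matom\"aki--Radziwi\l\l{}--Tao handled $H\ge Y^{\varepsilon}$, and later work still needs $H\ge\exp((\log Y)^{c})$ for a fixed $c>0$. Your second step forces the $\theta_x$ to cluster near $a/q+T/x$ by iterating $p\theta_x\approx p'\theta_{x'}$ across scales, with all other $x$ costing only $(\log Y)^{-1/700}$. That is exactly the ingredient that is unavailable in this range, so the argument does not close.

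The fix is to use that $\theta$ is fixed. This is the K\'atai/Bourgain--Sarnak--Ziegler argument which, combined with the Matom\"aki--Radziwi\l\l{} theorem, underlies the cited result.

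Start with the Ramar\'e/Tur\'an--Kubilius step with primes $p\sim P$. Apply the triangle inequality over $p$, and substitute $x=py$ separately for each $p$. This is harmless because the frequency does not depend on $x$. After splitting $p$ into short ranges, you are left with essentially $P\int_0^{Y/P}\sum_{p\sim P}|\sum_{y<m\le y+H/P}a_m\ee^{2\pi i\theta pm}|\dd y$. Cauchy--Schwarz in $m$ then removes the coefficients $a_m$ and leaves $\sum_{p,p'\sim P}|\sum_{y<m\le y+H/P}\ee^{2\pi i\theta(p-p')m}|$. The off-diagonal pairs contribute little unless $\theta$ is close to some $a/q$ with $q\le Q_H$.

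If you run the same substitution with $\theta_x$ in place of $\theta$, you get frequencies $p\theta_{py}$ and $p'\theta_{p'y}$. That coupling is precisely the structure problem you were forced into.

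In the remaining major-arc case, you pass to Dirichlet characters of modulus at most $Q_H$ and to short sums of $f\bar\chi$ with a fixed twist. The quantitative Matom\"aki--Radziwi\l\l{} theorem then supplies the term $\ee^{-\Mpret(f;Y,Q_H)/20}$. Your first and third steps fit this scheme once $\theta_x$ is replaced by a single $\theta$; your second step must be replaced by the direct minor-arc estimate above.
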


In \cite[Proposition 2.1]{OAI}, it is proved that for any fixed $h,q\geq1$ and $r\in\Z$, we have
\begin{equation}\label{Chowla_AP}
\frac1N\sum_{\substack{n\le N\\n\equiv r\pmod q}} \lambda(n)\lambda(n+h) \ll_{h,q} (\log N)^{-c}.
\end{equation}
In this section, we will use the method in the proof of \eqref{Chowla_AP} to estimate correlations of $f_\theta(n)$ and $f_\phi(n)$ in arithmetic progressions.

\begin{proposition}\label{prop:progression} 
Let $N\ge3$. Fix integers $h,q\ge1$. For $r\in\Z$ and
$\theta,\phi\in[-\pi,\pi]$, put
\[
 F_N^{h,q,r}(\theta,\phi)
 =\frac1N\sum_{\substack{n\le N\\n\equiv r\pmod q}}
 f_\theta(n)f_\phi(n+h).
\]
There are absolute positive constants $A_0,b,c$ and a constant
$G_0=G_0(h,q)$ such that
\begin{equation}\label{eq:progression-phase-bound}
 |F_N^{h,q,r}(\theta,\phi)|
 \ll_{h,q}G^{-b}
       +G\exp\{-c\max(\theta^2,\phi^2)\log\log N\}
\end{equation}
holds uniformly for $r\in \Z$, $\theta,\phi\in[-\pi,\pi]$ and
\begin{equation}\label{eq:scale-range}
 G\ge G_0,\qquad \log N\ge G^{A_0}.
\end{equation}
\end{proposition}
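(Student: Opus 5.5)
We follow the route of \cite[Proposition~2.1]{OAI}, with $\lambda$ replaced by the pair $f_\theta,f_\phi$, and keep track of the dependence on $(\theta,\phi)$ through Lemma~\ref{lem:distance} and Theorem~\ref{thm_MRT}. By symmetry (the substitution $n\mapsto n+h$, which swaps the roles of the two factors up to changing $r$) we may assume $|\theta|\ge|\phi|$, so the quantity to exploit is $\theta^2\log\log N$. The first step is to remove the progression condition. We write $\ind_{n\equiv r\,(q)}=\frac1q\sum_{a\bmod q}e(a(n-r)/q)$, which gives
\[
 |F_N^{h,q,r}(\theta,\phi)|\le\max_{\alpha\in\frac1q\Z}\Bigl|\frac1N\sum_{n\le N}f_\theta(n)f_\phi(n+h)e(\alpha n)\Bigr|.
\]
So it suffices to bound twisted two-point correlations with a rational twist of bounded denominator, uniformly in $\alpha$.

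The second step is the OpenAI/Tao-type argument giving the structural alternative. We introduce a scale parameter $G$ and consider primes $p$ in a dyadic-type range $P\le p\le P^{1+\epsilon}$, where $P=P(G)$ is chosen so that $\log P$ is a power of $G$. Then $\log N\ge G^{A_0}$ ensures $P\le(\log N)^{o(1)}$, well inside the range where the entropy decrement or the averaging argument of \cite{OAI} works. Using $f_\theta(pn)=e^{i\theta}f_\theta(n)$ for $p\nmid n$ (the exceptional $n$ cost $O(1/P)$), we replace the correlation by an average over $p$ of sums $\sum_n f_\theta(n)f_\phi(n+ph)e(\alpha pn)$, up to an error $G^{-b}$. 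The averaging-over-$p$ and Cauchy--Schwarz/Tur\'an--Kubilius step from \cite{OAI} is insensitive to the specific multiplicative function. We expect it to reduce, up to an error $O_{h,q}(G^{-b})$, to a bound on a short-interval exponential sum. Concretely, the reduction should look like
\[
 |F_N^{h,q,r}|\ll_{h,q}G^{-b}+G\cdot\sup_{\beta}\frac{1}{NH}\int_0^N\Bigl|\sum_{x<m\le x+H}f_\theta(m)e(\beta m)\Bigr|\dd x,
\]
with $H$ a suitable power of $P$. The factor $G$ accounts for the loss in the Cauchy--Schwarz/pigeonhole step. The same bound holds with $f_\phi$ in place of $f_\theta$, and we choose whichever is favourable.

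The third step applies Theorem~\ref{thm_MRT} with $f=f_\theta$, $Y=N$ and this $H$. It bounds the short sum by
\[
 e^{-\Mpret(f_\theta;N,Q_H)/20}+\frac{\log\log H}{\log H}+(\log N)^{-1/700}.
\]
Since $\log H\asymp G^{O(1)}$, the second term is $\ll G^{-b}$ after absorbing the factor $G$, provided the exponents are chosen properly. The third term is $\le G^{-b}$ because $\log N\ge G^{A_0}$. For the first term, $Q_H\le(\log N)^{1/125}$ by construction, so Lemma~\ref{lem:distance} with $Y=N$ gives $\Mpret\ge c\theta^2\log\log N-C$. This yields the contribution $G\exp\{-c'\theta^2\log\log N\}$, which is the second term of \eqref{eq:progression-phase-bound} since $\theta^2=\max(\theta^2,\phi^2)$. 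Taking $G_0$ large handles small-$N$ and constant issues.

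The main obstacle is the second step. We must verify that the argument of \cite{OAI}, written for $\lambda$ (real, with $\lambda(p)=-1$), works for complex-valued $f_\theta,f_\phi$ with different phases, and with only a polynomial loss $G$ and a power saving $G^{-b}$ in the scale parameter. We would first try to redo the proof with $\lambda(p)$ replaced by $e^{i\theta}$, $e^{i\phi}$. The phases $e^{i(\theta+\phi)}$ are unimodular constants that factor out of the $p$-averaging, so the argument should go through, provided the correlation is organized as a bilinear form in $f_\theta$ and a conjugated $f_\phi$.
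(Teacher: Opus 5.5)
Your endgame matches the paper. The proof does finish with $|F_N^{h,q,r}(\theta,\phi)|\ll_{h,q}G^{-b}+G\ee^{-M_*/20}$, where $M_*$ comes from Theorem~\ref{thm_MRT} applied to the factor carrying the larger phase $\theta_*$. Lemma~\ref{lem:distance} then turns this into $\exp(-c\theta_*^2\log\log N)$.

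\textbf{The gap is Step 2.} It carries the whole content of the proposition, and you only assert it (``we expect it to reduce \ldots''). The mechanism you describe also would not deliver it. Dilating by a single prime gives
$\frac1N\sum_{n\le N}f_\theta(n)f_\phi(n+h)=\frac{\ee^{-i(\theta+\phi)}}{pN}\sum_{m\le pN}p\ind_{p\mid m}f_\theta(m)f_\phi(m+ph)$.
This is an average at the $p$-dependent scale $pN$. Read at a fixed scale $X$, averaging over $p$ only controls $\mathbb E_p$ of the correlation at the scales $X/p$, i.e.\ an average over a range of scales. That is exactly why the averaging-over-$p$ plus Cauchy--Schwarz/Tur\'an--Kubilius (entropy decrement) argument yields logarithmic averages, as in \eqref{Tao2016}, and not a Ces\`aro bound. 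So it is not a function-independent step you can import from \cite{OAI}; the argument of \cite{OAI} used in the paper is built differently precisely to avoid this scale problem.

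\textbf{What is missing} is the scale-preserving construction the paper adapts from \cite{OAI}:
\begin{itemize}
\item Take $d=p_1\cdots p_J$ with one prime from each of $J$ separated ranges $\mathcal P_i$, and sieve weights $u(v)=4^{\omega(v)}$ on squarefree $v$ with prime factors in $\mathcal Q$.
\item Use the centered weight $\prod_{p\mid d}(\ind_{p\mid n}-1/p)$ and the phase correction $\xi(vd)=\ee^{-i(\theta+\phi)\Omega(vd)}$. This correction is not a constant that factors out, since $\omega(v)$ varies.
\item Bin the pairs $(d,v)$ by $\log(dv)$ into windows of width $\eta$, with matching scales $T_j=N\ee^{j\eta}$.
\item Expand the centered product. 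The $z=1$ term returns $S_0F_N^{h,q,r}$ at scales within a factor $\ee^{\eta}$ of $N$.
\item The $z>1$ terms are correlations $f_\theta(t)f_\phi(t+hz)$ with large shifts $z\in[H,2H)$, $H\ge\frac12\exp(G^{0.995})$, and unimodular coefficients $\xi(z)$. Only these go through Theorem~\ref{thm_MRT}, via $\int_0^1B(t)U_x(t)V_x(t)\dd t$ and the bounds $\norm B_\infty\ll G^{-0.99}$ and $\int_0^1|B|^4\ll H^{-1}G^{-3.96}$ (Lemmas~\ref{lem:rough-shifts} and~\ref{lem:centering-reduction}).
\item Use the phase-insensitive deletion estimate \eqref{eq:deletion-estimate} from \cite[Lemma 4.6, Proposition 8.4]{OAI} to bound $\sum_j|\mathcal C_j|$.
\end{itemize}
Comparing that bound with $\sum_j\mathcal C_j\approx S_0F_N^{h,q,r}$, and taking $J=\lfloor b\log G\rfloor$, $\eta=\ee^{-J}$, $K=\ee^{4J}$, is what produces $G^{-b}+G\ee^{-M_*/20}$. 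The power of $G$ in front of $\ee^{-M_*/20}$ comes from the $O(2^JG/\eta)$ pieces, not from a Cauchy--Schwarz loss. None of this structure appears in your proposal.

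\textbf{Two smaller points.}
\begin{itemize}
\item The substitution $n\mapsto n+h$ does not swap the two factors; it produces the shift $-h$. So there is no symmetry reduction. The paper simply applies the short-sum bound to whichever of $U_x,V_x$ carries $\theta_*$.
\item The relevant averages live at the varying scales $T_j/(vd')$, not only at $N$. So Lemma~\ref{lem:distance} is needed uniformly for $Y\in[N/3,N\ee^{110G}]$, not just at $Y=N$.
\end{itemize}
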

 If taking $\theta=\phi=\pi$ in \eqref{eq:progression-phase-bound}, then we recover \eqref{Chowla_AP}. 
To prove Proposition~\ref{prop:progression}, we need to establish some lemmas. We set up some notations first. Let $N\ge3$ and  $L=\log\log N$. Throughout this setup and the next section, fix positive
integers $h,q$, an integer $r$, phases $\theta,\phi\in[-\pi,\pi]$,
and a real number $G>1$ with $\log N\ge G^{A_0}$.
The absolute constant $A_0$ will be chosen sufficiently large below.

Fix $\delta_0=1/200$ and let $W$ be a sufficiently large fixed
constant. Set
\begin{equation}\label{eq:graph-parameters}
 b=\frac{\delta_0}{6W},\qquad
 J=\lfloor b\log G\rfloor,\qquad
 \eta=\ee^{-J},\qquad K=\ee^{4J}.
\end{equation}
Let
\begin{gather*}
 Y_i=G^{1-\delta_0}\ee^{6Wi}\quad(0\le i\le J),\\
 \mathcal P_i=\{p:p\equiv1\pmod5,\ p\nmid hq,\ Y_{i-1}\le\log p<Y_i\},
 \qquad 1\le i\le J,
\end{gather*}
and put $\mathcal P=\bigcup_i\mathcal P_i$ and
$\mathcal Q=\{p\le\ee^G:p\not\equiv1\pmod5,\ p\nmid hq\}$.
Let
\[
 \mathcal D=\{p_1\cdots p_J:p_i\in\mathcal P_i\},\qquad
 V_i=\sum_{p\in\mathcal P_i}\frac1p,\qquad V_* =\prod_{i=1}^J V_i.
\]

For large $G$, we have $\mathcal P_i=\{p:p\equiv1\pmod5,\ Y_{i-1}\le\log p<Y_i\}$. By Mertens' estimate in arithmetic progressions, there is a constant $c_{0}$ such that
\[
\sum_{\substack{p\le x\\p\equiv1\pmod5}}\frac1p
=\frac14\log\log x+c_{0}+o(1),
\qquad x\to\infty.
\]
Then we have
\[
V_i=\sum_{p\in\mathcal P_i}\frac1p=\frac14\bigl(\log Y_i-\log Y_{i-1}\bigr)+o(1)=\frac14\log(e^{6W})+o(1)=\frac32W+o(1).
\]
So we have
$W\le V_i\le2W$ for all $1\leq i\leq J$.

By \(
J=\left\lfloor\frac{\delta_0}{6W}\log G\right\rfloor
\), we have \(
6WJ\le\delta_0\log G\) and \(Y_J=G^{1-\delta_0}e^{6WJ}\le G.\) Then
\[
\log d =\sum_{i=1}^J\log p_i
<\sum_{i=1}^JY_i =Y_J\sum_{a=0}^{J-1}e^{-6Wa} <\frac{Y_J}{1-e^{-6W}}
\le\frac{G}{1-e^{-6W}}
<2G
\]
for sufficiently large $W$. So $d<\ee^{2G}$ for $d\in\mathcal D$.

Let $\mathcal V$ be the squarefree products of primes in
$\mathcal Q$, including $1$. Since $p\nmid hq$ for any $p\in \mathcal P_i$, we have $(dv,hq)=1$ for any  $d \in \mathcal D$ and $v\in \mathcal V$.
Recall that
$\omega_{\mathcal Q}(n)=\sum_{p\in\mathcal Q}\ind_{p\mid n}$
for $\mathcal Q$. Here $\ind_P$ is equal to 1 if the statement $P$ is true and zero otherwise.
Set
\begin{gather*}
 u(v)=4^{\omega(v)},\qquad
 w(n)=\sum_{\substack{v\in\mathcal V\\v\mid n}}u(v)
     =5^{\omega_{\mathcal Q}(n)},\qquad
 S=\prod_{p\in\mathcal Q}\left(1+\frac4p\right).
\end{gather*}

For integers $j\ge0$ with $j\eta\le100G$, let
\[
 \mathcal S_j=\{(d,v)\in\mathcal D\times\mathcal V:
       \omega(v)\le100\log G,\ j\eta\le\log(dv)<(j+1)\eta\},
\]
and put $T_j=N\ee^{j\eta}$. There are $O(G/\eta)$ many such $\mathcal S_j$.
Define
\[
 S_0=\sum_j\sum_{(d,v)\in\mathcal S_j}\frac{u(v)}{vd},
\]
then $SV_*/2\le S_0\le SV_*$ by \cite[Lemma~2.2]{OAI}.

Choose $\theta_*\in\{\theta,\phi\}$ with
$|\theta_*|=\max(|\theta|,|\phi|)$, and define
\begin{equation}\label{eq:pretentious-scale}
 M_* =\inf_{N/3\le Y\le N\exp(110G)}
 \Mpret\left(f_{\theta_*};Y,
       \min\{(\log Y)^{1/125},(10G)^5\}\right).
\end{equation}
For the short exponential sum errors in Theorem~\ref{thm_MRT}, let
\begin{equation}\label{eq:epsilon-star}
 \varepsilon_* =\ee^{-M_*/20}
       +G^{-0.995}\log G+(\log N)^{-1/700}.
\end{equation}

\begin{lemma}\label{lem:rough-shifts}
Use the common setup above. Let $H$ be an integer with
\[
 \tfrac12\exp(G^{0.995})\le H\le\exp(2G),
 \qquad N\ee^{-\eta}H<Y\le N\exp(2G).
\]
Let $\mathcal Z\subset[H,2H)$ consist of positive integers all of
whose prime factors belong to $\mathcal P$. For $|c_z|\le1$ and
any residue $r_0\pmod q$, put
\[
 \mathcal R=\frac1Y\sum_{n\le Y}\sum_{z\in\mathcal Z}
 \frac{c_z}{z}\ind_{n\equiv r_0\pmod q}
 f_\theta(n)f_\phi(n+hz).
\]
Then, uniformly in $\mathcal Z,c_z,r_0,\theta,\phi$,
\begin{equation}\label{eq:rough-shift-bound}
 |\mathcal R|\ll_h G^{-21/20}
           +G^{-3/4}\varepsilon_*+H/Y.
\end{equation}
\end{lemma}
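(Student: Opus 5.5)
The plan is to follow the proof of the corresponding rough-shift lemma in \cite{OAI}: Cauchy--Schwarz in $n$, an expansion into correlations in the shift variable, and Theorem~\ref{thm_MRT} applied after Fourier expansion. The task is to bound
\[
\mathcal R=\frac1Y\sum_{n\le Y}\ind_{n\equiv r_0\,(q)}f_\theta(n)\sum_{z\in\mathcal Z}\frac{c_z}{z}f_\phi(n+hz).
\]
Since $|f_\theta|\le1$ and $\sum_{z\in\mathcal Z}1/z\ll\sum_{z}\ind_{p\mid z\Rightarrow p\in\mathcal P}/z$, the weights are harmless. Because $\theta_*$ may be either $\theta$ or $\phi$, I first arrange that the function carrying the large phase sits in the inner sum. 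If $\theta_*=\theta$, I substitute $m=n+hz$, which changes the range of $n$ only by $O(hH)$ and costs $O_h(H/Y)$. After this, $f_{\theta_*}$ carries the shift. The residue condition $n\equiv r_0\pmod q$ I detect with additive characters $e(an/q)$, which are absorbed into the $\sup_\theta$ in Theorem~\ref{thm_MRT}.

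Next I apply Cauchy--Schwarz in $n$ to remove the unshifted factor. This gives
\[
|\mathcal R|^2\le\frac1Y\sum_{n\le Y}\Bigl|\sum_{z\in\mathcal Z}\frac{c_z}{z}g(n+hz)\Bigr|^2,
\]
where $g=f_{\theta_*}$ twisted by the character. Expanding the square gives a diagonal term and an off-diagonal term. The diagonal $z=z'$ contributes $\sum_z z^{-2}\ll H^{-1}\le\exp(-G^{0.995})$, which is negligible.

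For the off-diagonal, I exploit the structure of $z$ as a product of primes from $\mathcal P$. I write each $z=pz_1$ with $p\in\mathcal P_i$ dominant in size and use the Tur\'an--Kubilius/entropy-decrement device from \cite{OAI}: since $\mathcal P$ is sparse on the scale $\log p\in[G^{1-\delta_0},G]$ and $\sum_{p\in\mathcal P}1/p\asymp WJ\asymp\log G$, the term $G^{-21/20}$ should come from the second-moment (variance) loss of the dispersion. The main term is a bilinear average of $g(n+hpz_1)$ in which, after the change of variable $m=n/p$ restricted to $p\mid n$ (using multiplicativity $g(pm)=g(p)g(m)$ for $p\nmid m$), one obtains short sums of length $\asymp H/p$ twisted by $e(\alpha m)$. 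These are exactly in the form of Theorem~\ref{thm_MRT} with an interval length $H'\ge\exp(G^{0.995}/2)$ and range $Y'\in[N/3,N\exp(110G)]$. Theorem~\ref{thm_MRT} then yields the saving $\varepsilon_*$, using the definition \eqref{eq:pretentious-scale} for the $\Mpret$ term and $\log\log H'/\log H'\ll G^{-0.995}\log G$. The extra $G^{-3/4}$ is the normalization loss from averaging over $\mathcal P$: the density of the weights relative to $\sum 1/p$ gives roughly $(\log G)^{-1}$ per prime, and with the $\eta$-bucketing of $\mathcal S_j$, at most $G^{1/4}$ is lost.

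The main obstacle is the off-diagonal step: arranging that the correlation reduces to a sum of the form $\int_0^Y|\sum_{x<m\le x+H'}g(m)e(\alpha m)|\,dx$ with the correct range and with uniform control over $c_z$. The point is that the arbitrary weights $c_z$ can only be handled by taking absolute values inside the short-interval sum. Matching the precise exponents $21/20$ and $3/4$ also requires carefully tracking the Cauchy--Schwarz square root, so I would take the bound $|\mathcal R|^2\ll G^{-21/10}+G^{-3/2}\varepsilon_*+\cdots$ as the intermediate target.
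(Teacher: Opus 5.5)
\textbf{There is a genuine gap: the off-diagonal step, which is the whole content of the lemma, does not work as proposed.} Your peripheral steps match the paper. Detecting $n\equiv r_0\pmod q$ by additive characters absorbed into the $\sup$ over frequencies is what the paper does. So is reading off $\varepsilon_*$ from Theorem~\ref{thm_MRT} via the definition of $M_*$ and $\log\log H'/\log H'\ll G^{-0.995}\log G$.

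The off-diagonal terms are two-point correlations $g(n+hz)\overline{g(n+hz')}$, which can only be controlled on average over the shifts. Your mechanism for doing this fails for three reasons.
\begin{itemize}
\item $\mathcal R$ contains no condition $p\mid n$. Inserting one by Tur\'an--Kubilius or entropy decrement costs an error of order $(\sum_{p\in\mathcal P}1/p)^{-1/2}\asymp(\log G)^{-1/2}$, far larger than $G^{-21/20}$.
\item $\mathcal Z$ may consist of single primes (the case $|I|=1$ in Lemma~\ref{lem:centering-reduction}). Then $z_1=1$, your ``short sums of length $H/p$'' have length $O(1)$, and no expression $\int_0^Y|\sum_{x<m\le x+H'}g(m)e(\alpha m)|\,dx$ with $H'$ large ever arises.
\item Your accounting of the exponents is off. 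There is no $\eta$-bucketing inside this lemma; the factor $G/\eta$, and with it $G^{1/4}$, enters only later, in Lemma~\ref{lem:centering-reduction}.
\end{itemize}

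\textbf{The missing idea} is to put all the arithmetic of $\mathcal Z$ into the trigonometric polynomial $B(t)=\sum_{z\in\mathcal Z}\frac{c_z}{z}e(hzt)$.
\begin{enumerate}
\item Averaging over translates $n=x+a$, $1\le a\le H$, gives $\mathcal R=\frac1{YH}\sum_{x\le Y}\int_0^1B(t)U_x(t)V_x(t)\,dt+O(H/Y)$. Here $U_x$ and $V_x$ are exponential sums of lengths $H$ and $(2h+1)H$ of $f_\theta$ (restricted to the progression) and of $f_\phi$.
\item Since every $z$ has all prime factors at least $\exp(G^{0.995})$, \cite[Lemma~4.2]{OAI} gives $\|B\|_\infty\ll G^{-0.99}$ and $\int_0^1|B|^4\ll H^{-1}G^{-3.96}$.
\item Split the circle at $|B|=G^{-21/20}$. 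On the bulk, Cauchy--Schwarz and Parseval give $\int_0^1|U_xV_x|\ll_h H$, hence the term $G^{-21/20}$.
\item By Chebyshev, the exceptional set has measure $\ll H^{-1}G^{0.24}$. There, apply Theorem~\ref{thm_MRT} as $\sup_t\sum_x|W_x(t)|\ll_h\varepsilon_*YH$ to the $\theta_*$-factor and the trivial bound $O_h(H)$ to the other. This gives $G^{-0.99}\cdot G^{0.24}\,\varepsilon_*=G^{-3/4}\varepsilon_*$.
\end{enumerate}

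\textbf{Cauchy--Schwarz in $n$ is not needed, and it hurts.} In a second moment, $\varepsilon_*$ enters only linearly, since $\sum_x|V_x|^2\le\max_x|V_x|\sum_x|V_x|$ is all that Theorem~\ref{thm_MRT} supplies. So even your own intermediate target $|\mathcal R|^2\ll G^{-21/10}+G^{-3/2}\varepsilon_*$ only yields $|\mathcal R|\ll G^{-21/20}+G^{-3/4}\varepsilon_*^{1/2}$. That is weaker than \eqref{eq:rough-shift-bound}.
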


\begin{proof}
The lower bound for $Y$ implies $(2h+1)H\le Y$ for sufficiently
large $G$, and $H/Y\le\ee^\eta/N$. Thus both short-window lengths
used below satisfy the size requirement in \eqref{eq:mrt}.
Put
\[
 B(t)=\sum_{z\in\mathcal Z}\frac{c_z}{z}\ee^{2\pi i hzt}, t\in[0,1].
\]
By \cite[Lemma~4.2]{OAI}, we have
\begin{equation}\label{eq:rough-fourier}
 \norm B_\infty\ll G^{-0.99},\qquad
 \int_0^1|B(t)|^4\dd t\ll H^{-1}G^{-3.96}.
\end{equation}

We give the details of the translation average and the Fourier
identity.  Set
\[
 F(n)=
 \sum_{z\in\mathcal Z}\frac{c_z}{z}
 \mathbf 1_{n\equiv r_0\pmod q}
 f_\theta(n)f_\phi(n+hz),
\]
so that
\[
 \mathcal R=\frac1Y\sum_{1\le n\le Y}F(n).
\]
Since $\mathcal Z\subset[H,2H)\cap\mathbb N$ and $|c_z|\le1$, we have
\[
 |F(n)|
 \le\sum_{z\in\mathcal Z}\frac1z
 \le\frac{\#\mathcal Z}{H}
 \le1.
\]

Put $M=\lfloor Y\rfloor$. For each fixed integer $1\le a\le H$,
we have
\[
 \sum_{n=1}^{M}F(n)
 =
 \sum_{x=1}^{M}F(x+a)+E_a,
 \qquad
 E_a=
 \sum_{n=1}^{a}F(n)-\sum_{n=M+1}^{M+a}F(n).
\]
Since $|F(n)|\le1$, we have $|E_a|\le2a$. Summing this identity over $a=1,\ldots,H$, 
\[
 H\sum_{n=1}^{M}F(n)
 =
 \sum_{a=1}^{H}\sum_{x=1}^{M}F(x+a)
 +\sum_{a=1}^{H}E_a.
\]
Dividing by $YH$ and recalling that
$\mathcal R=Y^{-1}\sum_{n=1}^{M}F(n)$, we deduce
\[
 \mathcal R
 =
 \frac1{YH}\sum_{1\le x\le Y}\sum_{a=1}^{H}F(x+a)
 +\frac1{YH}\sum_{a=1}^{H}E_a.
\]
The error satisfies
\[
 \left|\frac1{YH}\sum_{a=1}^{H}E_a\right|
 \le\frac2{YH}\sum_{a=1}^{H}a=\frac{H+1}{Y}
 =O(H/Y).
\]
Therefore,
\begin{align*}
 \mathcal R
 &=
 \frac1{YH}\sum_{1\le x\le Y}\sum_{a=1}^{H}F(x+a)
 +O(H/Y)\\
 &=
 \frac1{YH}\sum_{1\le x\le Y}\sum_{a=1}^{H}
 \sum_{z\in\mathcal Z}\frac{c_z}{z}
 \mathbf 1_{x+a\equiv r_0\pmod q}
 f_\theta(x+a)f_\phi(x+a+hz)
 +O(H/Y).
\end{align*}

Now write $e(t)=e^{2\pi it}$ and define
\[
\begin{aligned}
 U_x(t)&=\sum_{a=1}^{H}
 f_\theta(x+a)\mathbf 1_{x+a\equiv r_0\pmod q}e(at),\\
 V_x(t)&=\sum_{b=1}^{(2h+1)H}f_\phi(x+b)e(-bt).
\end{aligned}
\]
Then expanding the sums yields
\begin{align*}
 \int_0^1 B(t)U_x(t)V_x(t)\,dt
 &=
 \sum_{z\in\mathcal Z}\sum_{a=1}^{H}
 \sum_{b=1}^{(2h+1)H}
 \frac{c_z}{z}
 \mathbf 1_{x+a\equiv r_0\pmod q}
 f_\theta(x+a)f_\phi(x+b)\\
 &\hspace{3em}\times
 \int_0^1e\bigl((hz+a-b)t\bigr)\,dt\\
 &=
 \sum_{a=1}^{H}\sum_{z\in\mathcal Z}\frac{c_z}{z}
 \mathbf 1_{x+a\equiv r_0\pmod q}
 f_\theta(x+a)f_\phi(x+a+hz).
\end{align*}
Substituting this identity into above translation average gives
\begin{equation}\label{eq:rough-convolution}
 \mathcal R
 =
 \frac1{YH}\sum_{1\le x\le Y}
 \int_0^1B(t)U_x(t)V_x(t)\,dt
 +O(H/Y).
\end{equation}

Since $|f_\theta(n)|=|f_\phi(n)|=1$, Parseval's identity gives
\[
 \int_0^1|U_x(t)|^2\,dt
 =
 \sum_{a=1}^{H}
 \left|f_\theta(x+a)\mathbf 1_{x+a\equiv r_0\pmod q}\right|^2=
 \sum_{a=1}^{H}\mathbf 1_{x+a\equiv r_0\pmod q}
 \le H,
\]
and
\[
 \int_0^1|V_x(t)|^2\,dt
 =
 \sum_{b=1}^{(2h+1)H}|f_\phi(x+b)|^2
 =(2h+1)H.
\]
Applying the Cauchy--Schwarz inequality, we  obtain
\[
\begin{aligned}
 \int_0^1|U_x(t)V_x(t)|\,dt
 &\le
 \left(\int_0^1|U_x(t)|^2\,dt\right)^{1/2}
 \left(\int_0^1|V_x(t)|^2\,dt\right)^{1/2}\\
 &\le H^{1/2}\bigl((2h+1)H\bigr)^{1/2}=\sqrt{2h+1}\,H
 \ll_h H.
\end{aligned}
\]

Thus, on $|B(t)|\le G^{-21/20}$, this part of
\eqref{eq:rough-convolution} is $O_h(G^{-21/20})$. Let $\mathcal T=\{t\in[0,1]: |B(t)|> G^{-21/20} \}$ be the complementary set. By \eqref{eq:rough-fourier}, it has measure
\[
 |\mathcal T|\ll G^{84/20}H^{-1}G^{-3.96}
              =H^{-1}G^{0.24}.
\]

Let
\[
 \mathcal E_{\mathcal T}
 :=
 \frac1{YH}\sum_{1\le x\le Y}
 \int_{\mathcal T}B(t)U_x(t)V_x(t)\,dt.
\]
Suppose that 
\begin{equation}\label{upper_bound_W}
 \sup_{t\in[0,1]}\sum_{1\le x\le Y}|W_x(t)|
 \ll_h \varepsilon_*YH,
\end{equation}
where $W_x$ is the factor corresponding to the phase $\theta_*$:
we take $W_x=U_x$ if $\theta_*=\theta$, and $W_x=V_x$ otherwise.
By the trivial bound $|U_x(t)|\le H$ and $|V_x(t)|\le(2h+1)H$, we have
\[
 \sum_{1\le x\le Y}|U_x(t)V_x(t)|
 \le(2h+1)H\sum_{1\le x\le Y}|W_x(t)|
 \ll_h\varepsilon_* YH^2.
\]
Then by the triangle inequality and $ \norm B_\infty\ll G^{-0.99}$ in \eqref{eq:rough-fourier}, we obtain
\[
\begin{aligned}
 |\mathcal E_{\mathcal T}|
 &\le
 \frac1{YH}\int_{\mathcal T}|B(t)|
 \sum_{1\le x\le Y}|U_x(t)V_x(t)|\,dt\ll_h
 \frac1{YH}\,G^{-0.99}\,
 \varepsilon_* YH^2\,|\mathcal T|\\
 &\ll_h
 \varepsilon_* H G^{-0.99}
 \bigl(H^{-1}G^{0.24}\bigr)=\varepsilon_* G^{-3/4}.
\end{aligned}
\]

Combining these two bounds with the translation error
$O(H/Y)$ in \eqref{eq:rough-convolution}, we conclude that
\[
 |\mathcal R|
 \ll_h
 G^{-21/20}+G^{-3/4}\varepsilon_*+\frac HY,
\]
as asserted in \eqref{eq:rough-shift-bound}. Therefore, it remains to justify the estimate \eqref{upper_bound_W}.

Recall that $M=\lfloor Y\rfloor$. Put $X=M+1$. For an integer  
$L_0\in\{H,(2h+1)H\}$, define
\[
 S_{x,L_0}(\alpha)
 =
 \sum_{j=1}^{L_0}f_{\theta_*}(x+j)e(\alpha j),
 \qquad e(t)=e^{2\pi it}.
\]
For an integer $x$ and $u\in[x,x+1)$, we have
\[
 \sum_{u<n\le u+L_0}f_{\theta_*}(n)e(\alpha n)
 =
 e(\alpha x)S_{x,L_0}(\alpha).
\]
Consequently,
\begin{align*}
 \sum_{x=1}^{M}|S_{x,L_0}(\alpha)|
 =
 \int_1^{M+1}
 \left|
 \sum_{u<n\le u+L_0}f_{\theta_*}(n)e(\alpha n)
 \right|\,du \le
 \int_0^X
 \left|
 \sum_{u<n\le u+L_0}f_{\theta_*}(n)e(\alpha n)
 \right|\,du.
\end{align*}
It follows by Theorem~\ref{thm_MRT} that
\begin{align*}
 \sup_{\alpha\in\mathbb R}
 \sum_{1\le x\le Y}|S_{x,L_0}(\alpha)|
 &\ll YH\left(
 \ee^{-\Mpret(f_{\theta_*};X,Q_{L_0})/20}
 +\frac{\log\log L_0}{\log L_0}
 +(\log X)^{-1/700}\right),
\end{align*}
where $Q_{L_0} =\min\{(\log X)^{1/125},(\log L_0)^5\}$.

Since
\(
 \tfrac12\exp(G^{0.995})\le H\le\exp(2G),
\)
we have
\[
 G^{0.995}-\log2
 \le\log L_0
 \le2G+\log(2h+1)
 \le3G.
\]
Then
 \[\frac{\log\log L_0}{\log L_0} \le \frac{\log(3G)}{G^{0.995}-\log2} \ll G^{-0.995}\log G.\]

Since $N\ee^{-\eta}H<Y\le N\exp(2G)$, we have
\[
 N/3\le X\le N\exp(2G)+1\le N\exp(110G).
\]
So $X$ lies in the
range defining $M_*$ in \eqref{eq:pretentious-scale}, and $(\log X)^{-1/700}\ll(\log N)^{-1/700}$.

Let
\(
 Q_*=\min\{(\log X)^{1/125},(10G)^5\}.
\)
Since $\log L_0\le3G$, we have $Q_{L_0}\le Q_*$. So
\[
 \mathcal M(f_{\theta_*};X,Q_{L_0})
 \ge \mathcal M(f_{\theta_*};X,Q_*)
 \ge M_*,
\]
and hence \[
 \exp\!\left(-\frac{\mathcal M(f_{\theta_*};X,Q_{L_0})}{20}\right)
 \le \exp(-M_*/20).
\]

Therefore,
\begin{align*}
 \sup_{\alpha\in\mathbb R}
 \sum_{1\le x\le Y}|S_{x,L_0}(\alpha)|
 \ll
 XL_0\left(
 \exp(-M_*/20)
 +G^{-0.995}\log G
 +(\log N)^{-1/700}
 \right)\ll_h YH\varepsilon_*.
\end{align*}

If $\theta_*=\phi$, take $L_0=(2h+1)H$. Since
\[
 V_x(t)=S_{x,L_0}(-t),
\]
the required estimate \eqref{upper_bound_W} for $V_x$ follows immediately.

If $\theta_*=\theta$, take $L_0=H$ and expand the progression
indicator by additive characters:
\[
 \mathbf1_{n\equiv r_0\pmod q}
 =
 \frac1q\sum_{a=0}^{q-1}e\!\left(\frac{a(n-r_0)}q\right).
\]
This gives
\[
 U_x(t)
 =
 \frac1q\sum_{a=0}^{q-1}
 e\!\left(\frac{a(x-r_0)}q\right)
 S_{x,H}\!\left(t+\frac aq\right).
\]
The coefficients have total absolute value one. Hence
\begin{align*}
 \sup_t\sum_{1\le x\le Y}|U_x(t)|
 &\le
 \frac1q\sum_{a=0}^{q-1}
 \sup_t\sum_{1\le x\le Y}
 \left|S_{x,H}\!\left(t+\frac aq\right)\right|\\
 &=
 \sup_\alpha\sum_{1\le x\le Y}|S_{x,H}(\alpha)|\\
 &\ll_h YH\varepsilon_*.
\end{align*}
Thus, \eqref{upper_bound_W} holds for $U_x$.
\end{proof}

\begin{lemma}\label{lem:centering-reduction}
Let
\[
 \xi(s)=\exp\{-i(\theta+\phi)\Omega(s)\},
\]
and
\begin{align}\label{eq:phased-bin}
 \mathcal C_j=\frac1{T_j}\sum_{n\le T_j}
 \sum_{(d,v)\in\mathcal S_j}
 u(v)\xi(vd)\ind_{v\mid n}\ind_{n\equiv rvd\pmod q} \cdot
 \prod_{p\mid d}\left(\ind_{p\mid n}-\frac1p\right)
 f_\theta(n)f_\phi(n+hvd).
\end{align}
Then we have
\begin{equation}\label{eq:centering-error}
 \frac1{SV_*}\left|\sum_j\mathcal C_j
                   -S_0F_N^{h,q,r}(\theta,\phi)\right|
 \ll_{h,q}\eta+N^{-1}
  +\frac{2^J}{\eta}
       \left(G^{-1/20}+G^{1/4}\ee^{-M_*/20}\right).
\end{equation}
\end{lemma}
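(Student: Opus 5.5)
We expand the centering weight multiplicatively. Write $\prod_{p\mid d}(\ind_{p\mid n}-1/p)=\sum_{e\mid d}\ind_{e\mid n}\,(-1)^{\omega(d/e)}(d/e)^{-1}$; this uses that $d$ is squarefree, since its prime factors come from the disjoint sets $\mathcal P_i$. Then $\sum_j\mathcal C_j$ splits into a \emph{main term} $e=d$ and at most $2^J$ \emph{off-diagonal} types, indexed by the set of $i$ for which $p_i\nmid e$.

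\textbf{Main term.} Here $v\mid n$ and $d\mid n$ with $(v,d)=1$, so $n=vdm$. Complete additivity of $\Omega$ gives $f_\theta(n)f_\phi(n+hvd)=f_\theta(vd)f_\phi(vd)f_\theta(m)f_\phi(m+h)$, and the phase $\xi(vd)$ cancels $f_\theta(vd)f_\phi(vd)$ exactly. Since $(vd,q)=1$, the condition $n\equiv rvd\pmod q$ becomes $m\equiv r\pmod q$. The $(d,v)$ term is therefore
\[
\frac{u(v)}{T_j}\sum_{\substack{m\le T_j/(vd)\\ m\equiv r\ (q)}}f_\theta(m)f_\phi(m+h).
\]
For $(d,v)\in\mathcal S_j$ we have $T_j/(vd)\in(N\ee^{-\eta},N]$. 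Replacing the range $m\le T_j/(vd)$ by $m\le N$ costs $O(\eta N+1)$, so the term equals $\frac{u(v)}{vd}\bigl(F_N^{h,q,r}(\theta,\phi)+O(\eta+N^{-1})\bigr)$. Summing over $j$ and $(d,v)$ gives $S_0F_N^{h,q,r}+O((\eta+N^{-1})S_0)$, and $S_0\le SV_*$ yields the first two error terms.

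\textbf{Off-diagonal terms.} Fix a proper divisor pattern and write $d=ez$ with $z>1$, where $z$ is composed of primes of $\mathcal P$. Put $n=vem$. Using $\xi$ and complete additivity again, the summand becomes
\[
\pm\frac{u(v)}{z}\,\xi'(\cdot)\,\ind_{m\equiv r' \ (q)}\,f_\theta(m)f_\phi(m+hz),
\]
with a unimodular factor depending only on $(v,e,z)$ and a residue $r'$ depending only on $vz$ mod $q$. We then split $z$ into dyadic ranges $[H,2H)$. The smallest prime of $\mathcal P$ has $\log p\ge Y_0=G^{0.995}$, so $H\ge\tfrac12\exp(G^{0.995})$. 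Also $z\le d<\ee^{2G}$, so there are $O(G)$ dyadic ranges.

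For fixed $j,v,e$ and a dyadic range, the remaining constraints are $(ez,v)\in\mathcal S_j$ and the residue class of $z$ mod $q$. We split $z$ further into the $\le q$ residue classes mod $q$; this is absorbed into $\ll_{h,q}$. The bin constraint is a condition on $z$ alone once $v,e$ are fixed, so it is absorbed into the coefficients $c_z\in\{0,\pm\text{unimodular}\}$. Lemma~\ref{lem:rough-shifts} then applies with $Y=T_j/(ve)$; this $Y$ lies in the required range because $\log(vez)\in[j\eta,(j+1)\eta)$. Each such piece is $\ll_h G^{-21/20}+G^{-3/4}\varepsilon_*+H/Y$, multiplied by the weight $u(v)/(ve)$.

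\textbf{Summation and the main obstacle.} We sum the weights $u(v)/(ve)$ over $v$ and over $e$ of the given pattern. This is at most $S\prod_{i\in\text{pattern}}V_i\le SV_*$, because $V_i\ge W\ge1$. There are $2^J$ patterns, $O(G)$ dyadic scales and $O(1/\eta)$ bins $j$, giving
\[
\frac{2^J G}{\eta}\bigl(G^{-21/20}+G^{-3/4}\varepsilon_*\bigr)\ll\frac{2^J}{\eta}\bigl(G^{-1/20}+G^{1/4}\ee^{-M_*/20}\bigr).
\]
The remaining parts of $\varepsilon_*$, namely $G^{-0.995}\log G$ and $(\log N)^{-1/700}$, are absorbed into $G^{-1/20}$ using $\log N\ge G^{A_0}$. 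The $H/Y$ errors contribute $\ll \ee^{2G}\ee^{\eta}/N\ll N^{-1/2}$, which is negligible.

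I expect the main obstacle to be the bookkeeping in this last step. The normalization by $1/z$ and the bin constraints must be placed correctly so that Lemma~\ref{lem:rough-shifts} applies with $|c_z|\le1$ uniformly. The losses also have to stay at exactly $2^J\cdot G/\eta$, rather than picking up an additional factor of $1/\eta$ from summing over $v,e$ inside a single bin.
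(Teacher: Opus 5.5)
Your proposal follows the paper's proof step for step. It uses the same expansion of $\prod_{p\mid d}(\ind_{p\mid n}-1/p)$, and the same substitution $n=vdm$ in the main term with the comparison $F^{h,q,r}_{T_j/(vd)}=F^{h,q,r}_N+O(\eta+N^{-1})$. For $z>1$ it uses the same substitution $n=vem$; the unimodular factor is exactly $\xi(z)$ and the residue is $rz \bmod q$. The remaining steps also match: dyadic and residue splitting of $z$, Lemma~\ref{lem:rough-shifts} with $Y=T_j/(ve)$, and the weight bound $\sum u(v)/(ve)\le S\prod_{i\notin I}V_i\le SV_*$. (That product runs over the indices where $e$ has a prime, not over your ``pattern''; this is harmless.)

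The one step that fails as written is the final count. The bins are indexed by $0\le j\le 100G/\eta$, so there are $O(G/\eta)$ of them, not $O(1/\eta)$. Your write-up bounds the $(v,e)$-weights by $SV_*$ inside a fixed bin. If you then also multiply by all $O(G)$ dyadic scales, you get $2^J G^2/\eta$ pieces. The $G^{-21/20}$ term then contributes $2^J G^{19/20}/\eta$, which is useless; the $\varepsilon_*$ term is off by a factor $G$ likewise. The missing observation is the one the paper uses. For fixed $j,v,e$, the bin condition $\log(vez)\in[j\eta,(j+1)\eta)$ confines $z$ to $[\ee^{j\eta}/(ve),\ee^{(j+1)\eta}/(ve))$, an interval of ratio $\ee^{\eta}<2$. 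So $z$ meets at most two dyadic ranges $[H,2H)$. Hence each choice of $(I,j,v,e)$ gives at most $2q$ nonempty pieces. The total is then $2^J\cdot O(G/\eta)\cdot 2q\cdot SV_*$ times the bound of Lemma~\ref{lem:rough-shifts}, which is what is needed.

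Alternatively, you could put the sum over $(v,e)$ outermost and note that for fixed $v,e,H$ only $O(1/\eta)$ bins $j$ can occur; this may be what you meant, but it has to be stated. Your worry about an extra factor $1/\eta$ from summing over $v,e$ inside one bin does not arise. The paper simply bounds that inner weight sum by the full product $S\prod_{i\notin I}V_i$ for every $j$, with no loss.
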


\begin{proof}
Expanding the product gives
\[
 \prod_{p\mid d}\left(\mathbf1_{p\mid n}-\frac1p\right)
 =
 \sum_{z\mid d}\frac{(-1)^{\omega(z)}}z
 \mathbf1_{\frac{d}{z}\mid n}.
\]
We separate the term $z=1$ from the terms $z>1$.

When $z=1$, the conditions $v\mid n$ and $d\mid n$ are equivalent
to $vd\mid n$. Substitute $n=vdt$. Since $(vd,q)=1$,
\[
 n\equiv rvd\pmod q
 \quad\Longleftrightarrow\quad
 t\equiv r\pmod q.
\]
Complete multiplicativity gives
\begin{align}
 \xi(vd)f_\theta(vdt)f_\phi(vd(t+h))
 &=
 \xi(vd)f_\theta(vd)f_\phi(vd)
 f_\theta(t)f_\phi(t+h) =f_\theta(t)f_\phi(t+h).
 \label{eq:full-phase}
\end{align}
Consequently, for fixed $j,d,v$, the contribution is
\[
 \frac{u(v)}{T_j}
 \sum_{\substack{t\le T_j/(vd)\\t\equiv r\pmod q}}
 f_\theta(t)f_\phi(t+h)
 =
 \frac{u(v)}{vd}
 F_{T_j/(vd)}^{h,q,r}(\theta,\phi).
\]

Recall that $T_j=Ne^{j\eta}$. For $(d,v)\in \mathcal S_j$, we have
\[
 e^{j\eta}\le vd<e^{(j+1)\eta},
 \qquad
 Ne^{-\eta}<\frac{T_j}{vd}\le N.
\]
To compare the corresponding averages, put
\[
 g(t)=\mathbf1_{t\equiv r\pmod q}
 f_\theta(t)f_\phi(t+h),
\]
so that $|g(t)|\le1$. For every real $0<Y\le N$,
\begin{align*}
 F_Y^{h,q,r}(\theta,\phi)-F_N^{h,q,r}(\theta,\phi)
 &=
 \left(\frac1Y-\frac1N\right)\sum_{t\le Y}g(t)
 -\frac1N\sum_{Y<t\le N}g(t).
\end{align*}
It follows that
\[
 \left|F_Y^{h,q,r}(\theta,\phi)-F_N^{h,q,r}(\theta,\phi)\right|
 \le
 \frac{N-Y}{N}+\frac{N-Y+1}{N}
 =
 \frac{2(N-Y)+1}{N}.
\]
If $Ne^{-\eta}<Y\le N$, then
\[
 \frac{N-Y}{N}<1-e^{-\eta}\le\eta,
\]
and hence
\[
 F_Y^{h,q,r}(\theta,\phi)
 =
 F_N^{h,q,r}(\theta,\phi)+O(\eta+N^{-1}).
\]
Summing over $j$ and recalling that
\[
 S_0=\sum_j\sum_{(d,v)\in\mathcal S_j}\frac{u(v)}{vd}
 \le SV_*,
\]
we conclude that the part of  $z=1$  contributes
\begin{equation}\label{eq:full-term}
 S_0F_N^{h,q,r}(\theta,\phi)
 +O\bigl(SV_*(\eta+N^{-1})\bigr).
\end{equation}

When $z>1$, write $d=d'z$. The divisibility conditions
are now $v\mid n$ and $d'\mid n$. Since $(v,d')=1$, substitute
$n=vd't$. The congruence becomes
\[
 vd't\equiv rvd'z\pmod q
 \quad\Longleftrightarrow\quad
 t\equiv rz\pmod q.
\]
Also, $n+hvd=vd'(t+hz)$, and complete multiplicativity gives
\begin{align}
 \xi(vd)f_\theta(vd't)f_\phi(vd'(t+hz))
 &=
 \xi(vd'z)f_\theta(vd')f_\phi(vd')
 f_\theta(t)f_\phi(t+hz)\notag\\
 &=\xi(z)f_\theta(t)f_\phi(t+hz).
 \label{eq:partial-phase}
\end{align}
Thus the contribution for fixed $j,v,d',z$ equals
\[
 \frac{(-1)^{\omega(z)}u(v)}{T_jz}
 \sum_{\substack{t\le T_j/(vd')\\t\equiv rz\pmod q}}
 \xi(z)f_\theta(t)f_\phi(t+hz).
\]

Let $I\subseteq\{1,\ldots,J\}$ be the nonempty set of indices
of the prime supplies contributing to $z$. Then
\[
 z=\prod_{i\in I}p_i,
 \qquad
 d'=\prod_{i\notin I}p_i,
 \qquad
 \omega(z)=|I|.
\]
Fix $j,I,v,d'$, and put
\[
 Y=\frac{T_j}{vd'}.
\]
The sign $(-1)^{\omega(z)}=(-1)^{|I|}$ is now fixed.
After factoring out $(-1)^{|I|}u(v)/(vd')$, the remaining
expression is
\[
 \frac1Y\sum_{t\le Y}\sum_z
 \frac{\xi(z)}z
 \mathbf1_{t\equiv rz\pmod q}
 f_\theta(t)f_\phi(t+hz),
\]
where $z$ contains exactly one prime from each supply indexed by
$I$, and the pair $(d'z,v)$ belongs to $\mathcal S_j$.

For fixed $j,v,d'$, we have
\[
 \frac{e^{j\eta}}{vd'}
 \le z<
 \frac{e^{(j+1)\eta}}{vd'}.
\]
Since $e^\eta<2$, this interval intersects at most two dyadic
intervals $[H,2H)$, with $H$ an integer power of two. Split the
labels further according to $z\bmod q$. There are at most $2q$
resulting pieces, and on each piece $rz\bmod q$ is a fixed
residue class $r_0$.

Let $\mathcal Z\subset[H,2H)$ be a nonempty resulting piece.
Every $z\in\mathcal Z$ contains at least one prime from
$\mathcal P$, and every such prime is at least
$\exp(G^{0.995})$. Moreover, $z\mid d$ for some
$d\in\mathcal D$, so $z\le d<\exp(2G)$. Therefore
\[
 \tfrac12\exp(G^{0.995})\le H\le\exp(2G).
\]
By $(d'z,v)\in \mathcal S_j$,
\[
 Ne^{-\eta}z<Y\le Nz
 \qquad(z\in\mathcal Z).
\]
It follows that
\[
 Y>Ne^{-\eta}H,
 \qquad
 Y\le Ne^{2G},
 \qquad
 \frac HY<\frac{e^\eta}{N}.
\]
Thus all the hypotheses of Lemma~\ref{lem:rough-shifts} hold.
Applying that lemma with $c_z=\xi(z)$ gives
\[
 \left|
 \frac1Y\sum_{t\le Y}\sum_{z\in\mathcal Z}
 \frac{\xi(z)}z
 \mathbf1_{t\equiv r_0\pmod q}
 f_\theta(t)f_\phi(t+hz)
 \right|
 \ll_h
 G^{-21/20}+G^{-3/4}\varepsilon_*+N^{-1}.
\]

For fixed $j$ and nonempty $I$, 
\begin{align*}
\Big|\sum_{\text{admissible }v,d'} (-1)^{|I|}u(v)/(vd') \Big| \leq  \sum_{\text{admissible }v,d'}\frac{u(v)}{vd'}
 \le
 \left(\sum_{v\in\mathcal V}\frac{u(v)}v\right)
 \prod_{i\notin I}\left(\sum_{p\in\mathcal P_i}\frac1p\right)=S\prod_{i\notin I}V_i
 \le SV_*.
\end{align*}
Here
\[
 \sum_{v\in\mathcal V}\frac{u(v)}v
 =\prod_{p\in\mathcal Q}\left(1+\frac4p\right)=S,
\]
and the last inequality follows from $V_i\ge W\ge1$.

There are at most $2^J$ nonempty choices of $I$, there are
$O(G/\eta)$ many $\mathcal S_j$'s, and each fixed choice gives at most $2q$
pieces. Hence the total contribution for the part $z>1$,
after division by $SV_*$, is
\begin{align*}
 &\ll_{h,q}
 2^J\frac{G}{\eta}
 \left(G^{-21/20}+G^{-3/4}\varepsilon_*+N^{-1}\right)\\
 &= \frac{2^J}{\eta}
 O_{h,q}\left(
 G^{-1/20}+G^{1/4}\varepsilon_*+\frac GN
 \right).
\end{align*}

Finally, by \eqref{eq:epsilon-star},
\[
 G^{1/4}\varepsilon_*
 =
 G^{1/4}e^{-M_*/20}
 +G^{-0.745}\log G
 +G^{1/4}(\log N)^{-1/700}.
\]
For sufficiently large $G$,
\[
 G^{-0.745}\log G\ll G^{-1/20}.
\]
Also, the scale condition $\log N\ge G^{A_0}$ implies
\[
 G^{1/4}(\log N)^{-1/700}
 \le G^{1/4-A_0/700}
 \le G^{-1/20}
\]
provided $A_0\ge210$. Increasing $A_0$ if necessary, we also have
\[
 \frac GN\le G\exp(-G^{A_0})\ll G^{-1/20}.
\]
Thus the normalized total contribution for the part $z>1$ is
\[
 \ll_{h,q}
 \frac{2^J}{\eta}
 \left(G^{-1/20}+G^{1/4}e^{-M_*/20}\right).
\]
Combining this with \eqref{eq:full-term}, we obtain
\[
 \frac1{SV_*}
 \left|
 \sum_j\mathcal C_j-S_0F_N^{h,q,r}(\theta,\phi)
 \right|
 \ll_{h,q}
 \eta+N^{-1}
 +\frac{2^J}{\eta}
 \left(G^{-1/20}+G^{1/4}e^{-M_*/20}\right),
\]
which proves \eqref{eq:centering-error}.
\end{proof}

\begin{proof}[Proof of Proposition~\ref{prop:progression}]
By \cite[Lemma 4.6, Proposition 8.4]{OAI}, we have
\begin{equation}\label{eq:deletion-estimate}
 \frac1{SV_*}\sum_j|\mathcal C_j|
 \ll_{h,q}2^J(K^{-1}+G^{-100}+\ee^{-2WJ})+\ee^{-G^{0.9}} +\frac{G}{\eta SV_*}\Big(\frac{SK(C_3\sqrt W)^J}{G} + \ee^{-G} \Big).
\end{equation}

Combining Lemma~\ref{lem:centering-reduction} and
\eqref{eq:deletion-estimate}, by
$V_*\ge W^J$ and $S_0\ge SV_*/2$, we have
\begin{align}\label{eq:complete-graph-error}
 |F_N^{h,q,r}(\theta,\phi)|\ll_{h,q}{}&\eta+N^{-1}
 +\frac{2^J}{\eta}\left(G^{-1/20}+G^{1/4}\ee^{-M_*/20}\right)\notag\\
 &+2^J(K^{-1}+G^{-100}+\ee^{-2WJ})
 +\frac K\eta\left(\frac{C_3}{\sqrt W}\right)^J
 +\ee^{-G^{0.9}}.
\end{align}

Recall that
\[
 J=\lfloor b\log G\rfloor,\qquad
 \eta=e^{-J},\qquad K=e^{4J},\qquad
 b=\frac{\delta_0}{6W}.
\]
Notice that for every fixed $a>0$, we have
\begin{equation}\label{parameter_W_b}
 e^{aJ}\le G^{ab},
 \qquad
 e^{-aJ}\le e^aG^{-ab}.
\end{equation}
Let $W$ be sufficiently large such that $b$ is sufficiently small.  Using \eqref{parameter_W_b} for each term of \eqref{eq:complete-graph-error}, we get that
\begin{equation}\label{eq:graph-M-bound}
 |F_N^{h,q,r}(\theta,\phi)|
 \ll_{h,q}G^{-b}+Ge^{-M_*/20}.
\end{equation}

Recall that
\[
 M_*=
 \inf_{N/3\le Y\le Ne^{110G}}
 \mathcal M\left(
 f_{\theta_*};Y,
 \min\{(\log Y)^{1/125},(10G)^5\}
 \right),
\]
where
\[
 |\theta_*|=\max\{|\theta|,|\phi|\},
 \qquad L=\log\log N.
\]
By Lemma~\ref{lem:distance},
\[
 \mathcal M\left(
 f_{\theta_*};Y,
 \min\{(\log Y)^{1/125},(10G)^5\}
 \right)
 \ge c_1\theta_*^2\log\log Y-C_1
\]
for absolute constants $c_1,C_1>0$.
For $N/3\le Y\le Ne^{110G}$, we have  $\log\log Y=L+O(1).$ So
\[
 M_*\ge c_1\theta_*^2L-C_2.
\]
Substituting this into \eqref{eq:graph-M-bound} proves
\[
 |F_N^{h,q,r}(\theta,\phi)|
 \ll_{h,q}
 G^{-b}
 +G\exp\left(
 -c\max\{\theta^2,\phi^2\}\log\log N
 \right)
\]
for an absolute constant $c>0$.
\end{proof}

To pass to the two affine forms, set
\[
 q=a_1a_2,\qquad c_0=\min(a_2b_1,a_1b_2),\qquad
 h=|\delta|,\qquad N'=qN+c_0.
\]
Take $(\theta',\phi')=(\theta,\phi)$ if $\delta>0$, and
$(\theta',\phi')=(\phi,\theta)$ if $\delta<0$. If
\[
 P_{\theta',\phi'}(Y)
 =\sum_{\substack{n\le Y\\n\equiv c_0\pmod q}}
 f_{\theta'}(n)f_{\phi'}(n+h),
\]
complete multiplicativity gives the exact identity
\begin{equation}\label{eq:affine-dilation}
 C_N^\ell(\theta,\phi)
 =\frac{\exp\{-i\theta\Omega(a_2)-i\phi\Omega(a_1)\}}{N}
 \bigl(P_{\theta',\phi'}(N')-P_{\theta',\phi'}(c_0)\bigr).
\end{equation}
Since $N'/N\ll_{\ell_1,\ell_2}1$,
$P_{\theta',\phi'}(c_0)=O_{\ell_1,\ell_2}(1)$, and
$\log\log N'=L+O_{\ell_1,\ell_2}(1/\log N)$,
\eqref{eq:progression-phase-bound} implies
\begin{equation}\label{eq:affine-high-basic}
 |C_N^\ell(\theta,\phi)|
 \ll_{\ell_1,\ell_2}G^{-b}
       +G\exp\{-cL\max(\theta^2,\phi^2)\}.
\end{equation}

\begin{corollary}\label{cor:high-frequency}
There is an absolute constant $D>0$ such that, with
\begin{equation}\label{eq:rho}
 \rho=\sqrt{\frac{D\log L}{L}},
\end{equation}
one has
\begin{equation}\label{eq:high-frequency}
 \sup_{[-\pi,\pi]^2\setminus[-\rho,\rho]^2}
 |\Delta_N(\theta,\phi)|\ll_{\ell_1,\ell_2}L^{-14}.
\end{equation}
\end{corollary}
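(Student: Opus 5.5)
Since $\Delta_N=C_N^\ell-A_NA_N$, I will bound $C_N^\ell$ and the product $A_N(\theta)A_N(\phi)$ separately, each by $O(L^{-14})$, on the region where $\max(|\theta|,|\phi|)>\rho$.

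For $C_N^\ell$, apply \eqref{eq:affine-high-basic} with $G$ a power of $L$. Take $G=L^{14/b}$, where $b$ is the absolute constant from Proposition~\ref{prop:progression}. Then $G^{-b}=L^{-14}$. We must also check the admissibility condition \eqref{eq:scale-range}. Its requirement $\log N\ge G^{A_0}=L^{14A_0/b}=(\log\log N)^{14A_0/b}$ holds for all large $N$, since any fixed power of $\log\log N$ is $o(\log N)$. Its requirement $G\ge G_0(h,q)$ likewise holds for large $N$, because $h,q$ depend only on $\ell_1,\ell_2$.

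On the region, $\max(\theta^2,\phi^2)\ge\rho^2=D\log L/L$. Hence
\[
G\exp\{-cL\max(\theta^2,\phi^2)\}\le L^{14/b}\exp\{-cD\log L\}=L^{14/b-cD}.
\]
Choosing $D$ with $cD\ge14/b+14$ makes this at most $L^{-14}$. Since $b$ and $c$ are absolute, $D$ is absolute. Thus $|C_N^\ell(\theta,\phi)|\ll_{\ell_1,\ell_2}L^{-14}$ on the region.

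For the product, $|A_N(\phi)|\le 1$ trivially. Say $|\theta|>\rho$; the case $|\phi|>\rho$ is symmetric. It then suffices to show $|A_N(\theta)|\ll L^{-14}$. I would get this from the same machinery, either by a Halász-type bound or directly: $A_N(\theta)$ is a mean of the multiplicative function $f_\theta$. The simplest route inside the paper is to write $A_N(\theta)=C_N^{\ell'}(\theta,0)$ for an admissible pair $\ell'$, e.g.\ $\ell_1'(n)=n$ and $\ell_2'(n)=n+1$, since $f_0\equiv1$. Then \eqref{eq:affine-high-basic} applies with $\max(\theta^2,0)=\theta^2>\rho^2$, giving the same bound $L^{-14}$ with an absolute implied constant. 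Alternatively, Halász's theorem with Lemma~\ref{lem:distance} (taking $Q=1$, $t$ small) gives $|A_N(\theta)|\ll(1+M)e^{-M}+1/\sqrt{\log N}$ with $M\ge c\theta^2 L-C$, which is even stronger.

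The main point requiring care is bookkeeping rather than substance. The implied constant in \eqref{eq:affine-high-basic} must be uniform in $(\theta,\phi)$, which Proposition~\ref{prop:progression} asserts. Also, the single choice $G=L^{14/b}$ must satisfy \eqref{eq:scale-range} simultaneously for all $(\theta,\phi)$, which it does because $G$ does not depend on the phases. Combining the two bounds by the triangle inequality gives \eqref{eq:high-frequency}.
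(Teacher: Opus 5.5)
Your proposal is correct and follows essentially the same route as the paper. You take $G$ to be a fixed power of $L$ (your exponent $14/b$ plays the role of the paper's $B$), which gives $G^{-b}\le L^{-14}$, and you choose $D$ with $cD\ge 14/b+14$ to kill the second term of \eqref{eq:affine-high-basic}; you then bound one factor of $A_N(\theta)A_N(\phi)$ by the same estimate with the other phase set to $0$, and the other factor trivially.

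One small difference: the paper obtains $A_N(t)=F_N^{1,1,0}(t,0)$ directly from Proposition~\ref{prop:progression} with $h=q=1$, $r=0$. This avoids your pair $\ell_1'(n)=n$, which has $b_1=0$ and so lies outside the paper's positivity hypothesis on $a_i,b_i$. The issue is harmless, since e.g.\ $n+1,n+2$ also works at a cost of $O(1/N)$.
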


\begin{proof}
Recall that
\[
 \Delta_N(\theta,\phi)
 =C_N^\ell(\theta,\phi)-A_N(\theta)A_N(\phi).
\]

Suppose 
$\max\{|\theta|,|\phi|\}\ge\rho$. Choose a fixed $B>0$ such that $Bb\ge14$, and set $G=L^B$.
Then $G^{-b}=L^{-Bb}\le L^{-14}.$  Choose a fixed $D>0$ such that $cD\ge B+14$, then
\[
 \max\{\theta^2,\phi^2\}L
 \ge \rho^2L
 =D\log L.
\]
Consequently,
\[
 G\exp\left(-c\max\{\theta^2,\phi^2\}L\right)
 \le L^B e^{-cD\log L}
 =L^{B-cD}
 \le L^{-14}.
\]
Combining the two terms gives
\[
 |C_N^\ell(\theta,\phi)|
 \ll_{\ell_1,\ell_2}L^{-14}
 \qquad
 \bigl(\max\{|\theta|,|\phi|\}\ge\rho\bigr).
\]

Recall that
\[
 A_N(t)=\frac1N\sum_{n\le N}f_t(n),
\]
take $h=q=1$, $r=0$, and phases $(t,0)$ in
Proposition~\ref{prop:progression}. Then
\[
 F_N^{1,1,0}(t,0)
 =\frac1N\sum_{n\le N}f_t(n)f_0(n+1)
 =A_N(t).
\]
Hence
\[
 |A_N(t)|\ll G^{-b}+Ge^{-ct^2L}.
\]

For $|t|\ge\rho$, by the argument above, we have
\(
 |A_N(t)|\ll L^{-14}.
\)
For arbitrary $t$, the trivial bound is
\(
 |A_N(t)|\le1.
\)
Thus, on the region $\max\{|\theta|,|\phi|\}\ge\rho$, 
\[
 |A_N(\theta)A_N(\phi)|\ll L^{-14}.
\]

By the triangle inequality, we obtain \eqref{eq:high-frequency}.
\end{proof}

\section{Small frequencies}

\begin{proposition}\label{prop:small-frequency}
There is an absolute constant $c>0$ such that, uniformly for
$|\theta|,|\phi|\le L^{-1/3}$,
\begin{equation}\label{eq:small-pointwise}
 |\Delta_N(\theta,\phi)|
 \ll_{\ell_1,\ell_2}(|\theta|+|\phi|)\log L+|\theta\phi|
                +\ee^{-cL\log L}+N^{-1/2}.
\end{equation}
\end{proposition}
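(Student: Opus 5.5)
The plan is a first-order Taylor comparison of $C_N^\ell(\theta,\phi)$ and $A_N(\theta)A_N(\phi)$ after truncating the prime factorization. For small $|\theta|,|\phi|$ the function $f_\theta(n)=\exp(i\theta\Omega(n))$ differs from $1$ only through $\theta\Omega(n)$. The idea is to split $\Omega(n)=\Omega_{\le z}(n)+\Omega_{>z}(n)$, with small primes $p\le z$ and large primes $p>z$, and a parameter $z=\exp\bigl((\log N)^{\epsilon}\bigr)$ or $z=N^{1/\log L}$ chosen so that two things hold. First, $\mathbb E\,\Omega_{>z}(n)\ll\log L$ on $n\le N$, so the large-prime parts contribute at most $O((|\theta|+|\phi|)\log L)$. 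Second, the small-prime parts of $\ell_1(n),\ell_2(n)$ become asymptotically independent by the fundamental lemma of sieve / CRT equidistribution. Concretely, write $f_\theta(m)=f_\theta^{\le z}(m)\,f_\theta^{>z}(m)$ and use $|f_\theta^{>z}(m)-1|\le|\theta|\,\Omega_{>z}(m)$. Averaging over $n\le N$ with Mertens gives $\frac1N\sum_{n\le N}\Omega_{>z}(\ell_i(n))\ll \log\frac{\log N}{\log z}+1\ll\log L$. Hence $C_N^\ell(\theta,\phi)$ equals the small-prime correlation $C^{\le z}$ up to $O((|\theta|+|\phi|)\log L)$, and similarly $A_N(\theta)=A_N^{\le z}(\theta)+O(|\theta|\log L)$.

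Next I compare $C^{\le z}$ with the product $A^{\le z}(\theta)A^{\le z}(\phi)$. Since $\Omega_{\le z}(m)$ is determined by the $z$-smooth part of $m$, I expand $f^{\le z}_\theta(m)=\sum_{d\mid m,\,P^+(d)\le z}g_\theta(d)$ with $g_\theta=f_\theta^{\le z}*\mu$. Here $g_\theta$ is supported on $z$-smooth $d$, with $g_\theta(p^k)=e^{ik\theta}-e^{i(k-1)\theta}$, so $|g_\theta(p^k)|\le|\theta|$. Then
\[C^{\le z}=\sum_{d_1,d_2}g_\theta(d_1)g_\phi(d_2)\frac1N\#\{n\le N:d_1\mid\ell_1(n),\ d_2\mid\ell_2(n)\}.\]
The local density of the divisibility event is $\kappa(d_1,d_2)\approx\frac{1}{d_1d_2}\cdot\gcd$-corrections. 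These corrections come from primes dividing $a_1a_2\delta$ and from common factors of $d_1,d_2$; a prime $p\mid(d_1,d_2)$ forces $p\mid\delta$. The main term then factorizes as $\bigl(\sum g_\theta(d)/d\bigr)\bigl(\sum g_\phi(d)/d\bigr)$, up to the finitely many primes $p\mid a_1a_2\delta$. The correction at those primes has size $\ll|\theta\phi|+|\theta|+|\phi|$ with constants depending on $\ell_1,\ell_2$, since each involves $g_\theta(p^k)g_\phi(p^{k'})$ or a single $g$. For $A^{\le z}(\theta)A^{\le z}(\phi)$ the same expansion yields the same product. The difference between a generic $n$ and $\ell_i(n)$ only changes local factors at $p\mid a_i$, again a cost $O(|\theta|+|\phi|)$. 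The $|\theta\phi|$ term arises naturally from cross terms at primes dividing $\delta$.

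The main obstacle is controlling the tails of the divisor sums: $d_i$ must be truncated to $d_i\le N^{1/2}$ so that the error $O(1)$ per pair in the counting totals at most $N^{-1/2}$ after normalization. This requires bounding the contribution of $z$-smooth $d>N^{1/4}$ weighted by $|g_\theta(d)|/d$. I would use Rankin's trick: $\sum_{P^+(d)\le z,\,d>y}\frac{|g(d)|}{d}\le y^{-\sigma}\prod_{p\le z}\bigl(1+O(p^{\sigma-1})\bigr)$. With $\log z=\log N/(L\log L)$ and $\sigma\asymp L\log L/\log N$ this gives $\exp(-cL\log L)$, which explains the $\ee^{-cL\log L}$ term. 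Rather than a sharp cutoff, I would use a smooth cutoff or the fundamental lemma to keep the truncated and untruncated versions close. The hypothesis $|\theta|,|\phi|\le L^{-1/3}$ keeps $\prod_{p\le z}(1+|\theta|/p)$ harmless, since $|\theta|\sum_{p\le z}1/p\le L^{2/3}$ and the Rankin weight is $e^{O(L^{2/3})}$, absorbed by $e^{-cL\log L}$. Choosing $z$ so that $\log\log N-\log\log z\asymp\log L$ then balances the large-prime cost $(|\theta|+|\phi|)\log L$ against the tail $\ee^{-cL\log L}$.
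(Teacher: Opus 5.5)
Your route is viable and genuinely different from the paper's, but two steps need repair.

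First, the joint main term does \emph{not} factorize away from the primes dividing $a_1a_2\delta$. A good prime $p\nmid a_1a_2\delta$ cannot divide both $\ell_1(n)$ and $\ell_2(n)$, so pairs $(d_1,d_2)$ sharing such a prime have density $0$; yet they do occur in $\bigl(\sum g_\theta(d)/d\bigr)\bigl(\sum g_\phi(d)/d\bigr)$. Write $a_p=\sum_{k\ge1}g_\theta(p^k)p^{-k}$ and $b_p$ for the analogue with $\phi$. At a good prime the joint local factor is $1+a_p+b_p$, while the product factor is $(1+a_p)(1+b_p)$. Both have modulus at most $1$ (e.g.\ $1+a_p=(1-1/p)/(1-e^{i\theta}/p)$), and $|a_pb_p|\le4|\theta\phi|/p^2$. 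Telescoping over all $p\le z$ therefore costs $O(|\theta\phi|)$. This discrepancy, summed over every prime, is the actual source of the $|\theta\phi|$ term; the primes dividing $\delta$ only cost $O_{\ell_1,\ell_2}(|\theta|+|\phi|)$.

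Second, your Rankin bound controls only the tail $\sum_{d>D}|g(d)|/d$ of the main-term series. On the integer side, divisors $d\mid\ell_i(n)$ with $D<d\ll N$ are not counted with density close to $1/d$, so a separate argument is needed. One way: the discarded part vanishes unless the $z$-smooth part $s_i(n)$ of $\ell_i(n)$ exceeds $D$, and is $\ll\tau(s_1(n))\tau(s_2(n))$ when it does. Then $\mathbf 1_{s>D}\le(s/D)^\sigma$, Cauchy--Schwarz, and a Shiu-type bound for nonnegative multiplicative functions along $a_in+b_i$ give $e^{-cL\log L+O(L)}$.

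Two smaller corrections. The cut-off must be $d_1,d_2\le N^{1/4}$, not $N^{1/2}$, for the $O(1)$ counting errors to total $N^{-1/2}$. Also, of your candidate values of $z$, only something like $\log z\asymp\log N/(L\log L)$ (or the paper's $\log N/L^2$) meets both of your requirements: $z=\exp((\log N)^{\epsilon})$ makes $\mathbb E\,\Omega_{>z}\asymp L$, and $z=N^{1/\log L}$ makes $D^{-\sigma}$ merely a power of $L$.

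For comparison, the paper never expands into divisors. It puts prime powers, the primes dividing $a_1a_2\delta$, and all primes above $y=N^{1/L^2}$ into $R(n)=\Omega(n)-S(n)$, whose mean along $\ell_i(n)$ is $O(\log L)$. It then compares $e^{i\theta S(\ell_1(n))+i\phi S(\ell_2(n))}$ with an independent-residue model through a Taylor polynomial of degree $m=\lceil L\rceil$. Each monomial in a moment of order at most $m$ is empty or a single residue class to a modulus at most $y^m$, far below $N$. So the moment-comparison error is trivially $(2y)^m/N\ll N^{-1/2}$, and no smooth-number tail estimate is needed. The price is the Taylor remainder $\rho_0^m\,\mathbb E e^{Z^*}\le\exp(-\tfrac m3\log L+CL)$, which is exactly where $|\theta|,|\phi|\le L^{-1/3}$ is used. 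Near-independence then comes from $|J_p-P_p|=|uv|/p^2$, the same local discrepancy as your $a_pb_p$.

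Your Euler-product route keeps prime powers and bad primes inside the local factors and uses the restriction on $\theta,\phi$ only cosmetically. In exchange, it must pay with Rankin's trick plus a Shiu-type (or fundamental-lemma) estimate to justify the truncation on the integers.
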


\begin{proof}
For $\ell_i(n)=a_in+b_i, \delta=a_1b_2-a_2b_1\ne0, i=1,2$ and $L=\log\log N$, 
define
\[
 \mathcal B=\{p:p\mid a_1a_2|\delta|\},
 \qquad y=N^{1/L^2},
 \qquad
 S(n)=\sum_{\substack{p\le y\\p\notin\mathcal B}}
       \mathbf1_{p\mid n}.
\]
 Put
\(
 R(n)=\Omega(n)-S(n).
\)
Then
\[
 R(n)
 =
 \sum_{\substack{p>y\ \text{or}\ p\in\mathcal B}}
       \mathbf1_{p\mid n}
 +\sum_p\sum_{j\ge2}\mathbf1_{p^j\mid n}.
\]

Let
\(
 C_*=\max(1,a_1+b_1,a_2+b_2), M=C_*N.
\)
Then $\ell_i(n)\leq M$ for  $1\leq n \leq N$, and
we obtain
\begin{align}
 \frac1N\sum_{n\le N}R(\ell_i(n))
 & \le\frac1N\sum_{m\le M}R(m) \le
 C_*\left(
 \sum_{y<p\le M}\frac1p
 +\sum_{p\in\mathcal B}\frac1p
 +\sum_p\sum_{j\ge2}\frac1{p^j}
 \right).
 \label{eq:affine-truncation-intermediate}
\end{align}
The last two sums are bounded. By Mertens' estimate,
\[
 \sum_{y<p\le M}\frac1p
 =\log\log M-\log\log y+O(1).
\]
By
\(
 \log\log y=L-2\log L\)
 and
 \(
 \log\log M=L+O_{\ell_1,\ell_2}(1).
\)
we get that
\begin{equation}\label{eq:affine-truncation}
 \frac1N\sum_{n\le N}R(\ell_i(n))
 \ll_{\ell_1,\ell_2}\log L.
\end{equation}
The same argument gives
\[
 \frac1N\sum_{n\le N}R(n)
 \ll_{\ell_1,\ell_2}\log L.
\]

For real $u,v$, the inequality
$|e^{iu}-e^{iv}|\le|u-v|$ implies
\begin{align*}
 &\left|
 e^{i\theta\Omega(\ell_1(n))+i\phi\Omega(\ell_2(n))}
 -
 e^{i\theta S(\ell_1(n))+i\phi S(\ell_2(n))}
 \right|\le
 |\theta|R(\ell_1(n))+|\phi|R(\ell_2(n)).
\end{align*}
So
\begin{equation}\label{truncation_1}
\frac1N\sum_{n\le N}\left|
 e^{i\theta\Omega(\ell_1(n))+i\phi\Omega(\ell_2(n))}
 -
 e^{i\theta S(\ell_1(n))+i\phi S(\ell_2(n))}
 \right|\ll ( |\theta| + |\phi| )\log L.
\end{equation}

Let
\[
A_N(t)=\frac1N\sum_{n\le N}e^{it\Omega(n)},
\qquad
B_N(t)=\frac1N\sum_{n\le N}e^{itS(n)}.
\]
Then
\[
\begin{aligned}
|A_N(t)-B_N(t)|
&\le\frac1N\sum_{n\le N}
|e^{it\Omega(n)}-e^{itS(n)}|\le\frac{|t|}{N}\sum_{n\le N}R(n)\ll_{\ell_1,\ell_2}|t|\log L.
\end{aligned}
\]
So
\begin{equation}\label{truncation_2}
|A_N(\theta)-B_N(\theta)|
\ll_{\ell_1,\ell_2}|\theta|\log L,
\qquad
|A_N(\phi)-B_N(\phi)|
\ll_{\ell_1,\ell_2}|\phi|\log L.
\end{equation}

For each $p\le y$ with $p\notin\mathcal B$, independently choose
a uniform residue $U_p\in\mathbb Z/p\mathbb Z$ and define
\[
 X_p=\mathbf1_{a_1U_p+b_1\equiv0\pmod p},
 \qquad
 Y_p=\mathbf1_{a_2U_p+b_2\equiv0\pmod p}.
\]
Since $p\nmid a_1a_2$, each congruence has exactly one root.
Therefore
\[
 \mathbb P(X_p=1)=\mathbb P(Y_p=1)=\frac1p,
 \qquad X_pY_p=0.
\]
Equivalently,
\[
 \mathbb P((X_p,Y_p)=(1,0))=\frac1p,\qquad
 \mathbb P((X_p,Y_p)=(0,1))=\frac1p,
\]
and
\[
 \mathbb P((X_p,Y_p)=(0,0))=1-\frac2p.
\]
Put
\[
 S_1^*=\sum_{\substack{p\le y\\p\notin\mathcal B}}X_p,
 \qquad
 S_2^*=\sum_{\substack{p\le y\\p\notin\mathcal B}}Y_p.
\]
The vectors $(X_p,Y_p)$ are independent across distinct primes.

A degree-$k$ monomial in the expansion of
\[
 \bigl(\theta S(\ell_1(n))+\phi S(\ell_2(n))\bigr)^k
\]
is a scalar multiple of a product
\[
 \prod_{a=1}^{k}\mathbf1_{p_a\mid\ell_{i_a}(n)},
 \qquad i_a\in\{1,2\}.
\]
There are at most $(2y)^k$ ordered monomials. If
$|\theta|,|\phi|\le\rho_0$, each scalar coefficient has
absolute value at most $\rho_0^k$. Notice that
\[
 \left|
 \frac1N\#\{n\le N:n\equiv a\pmod d\}-\frac1d
 \right|
 \le\frac1N.
\]
for any $d\ge2$. Hence, for $k\ge1$,
\begin{align}
 &\left|
 \frac1N\sum_{n\le N}
 \bigl(\theta S(\ell_1(n))+\phi S(\ell_2(n))\bigr)^k
 -
 \mathbb E(\theta S_1^*+\phi S_2^*)^k
 \right|
 \le\frac{(2\rho_0y)^k}{N}.
 \label{eq:moment-comparison}
\end{align}

Set $Z^*=S_1^*+S_2^*$. Since $X_p+Y_p$ equals one
with probability $2/p$ and zero otherwise,
\[
 \mathbb Ee^{X_p+Y_p}=1+\frac{2(e-1)}p.
\]
Independence across primes therefore gives
\[
 \mathbb Ee^{Z^*}
 =
 \prod_{\substack{p\le y\\p\notin\mathcal B}}
 \left(1+\frac{2(e-1)}p\right)
 \le
 \exp\left(2(e-1)\sum_{p\le y}\frac1p\right)
 \le e^{CL}
\]
for an absolute constant $C$.

Take
\[
 \rho_0=L^{-1/3},\qquad m=\lceil L\rceil,
 \qquad
 P_{m-1}(x)=\sum_{k=0}^{m-1}\frac{(ix)^k}{k!}.
\]
For real $x$, we have
\[
 |e^{ix}-P_{m-1}(x)|\le\frac{|x|^m}{m!}.
\]
Let $X^*=\theta S_1^*+\phi S_2^*$. Since
\(
 |\theta S_1^*+\phi S_2^*|\le\rho_0Z^*,
\)
for the remainder we have
\begin{align*}
\mathbb E\left|e^{iX^*}-P_{m-1}(X^*)\right|
\leq  \frac{\rho_0^m}{m!}\mathbb E(Z^*)^m
 \le \rho_0^m\mathbb Ee^{Z^*}\le
 \exp\left(-\frac m3\log L+CL\right)
 \le e^{-cL\log L}
\end{align*}
for sufficiently large $L$ for some constant $c>0$.

For the integer remainder, let
\[
X_n=\theta S(\ell_1(n))+\phi S(\ell_2(n)),
\qquad
P_{m-1}(x)=\sum_{k=0}^{m-1}\frac{(ix)^k}{k!}.
\]
By the above estimate, we have
\[
\begin{aligned}
\frac1N\sum_{n\le N}
\left|e^{iX_n}-P_{m-1}(X_n)\right|
&\le
\frac{\rho_0^m}{m!}\mathbb E(Z^*)^m
+\frac{\rho_0^m(2y)^m}{m!N}\\
&\le e^{-cL\log L}+\frac{(2y)^m}{N}.
\end{aligned}
\]

By linearity,
\[
 \frac1N\sum_{n\le N}P_{m-1}(X_n)
 -\mathbb EP_{m-1}(X^*)
 =
 \sum_{k=0}^{m-1}\frac{i^k}{k!}
 \left(
 \frac1N\sum_{n\le N}X_n^k-\mathbb E(X^*)^k
 \right).
\]
The triangle inequality  gives
\[
 \left|
 \frac1N\sum_{n\le N}P_{m-1}(X_n)
 -\mathbb EP_{m-1}(X^*)
 \right|
 \le
 \sum_{k=0}^{m-1}\frac1{k!}
 \left|
 \frac1N\sum_{n\le N}X_n^k-\mathbb E(X^*)^k
 \right|.
\]
For every $k\ge1$, the moment comparison \eqref{eq:moment-comparison} yields
\[
 \left|
 \frac1N\sum_{n\le N}X_n^k-\mathbb E(X^*)^k
 \right|
 \le\frac{(2\rho_0y)^k}{N}.
\]
For sufficiently large $N$, we have $\rho_0=L^{-1/3}\le1$
and $2y\ge1$.  Hence, 
\[
 \left|
 \frac1N\sum_{n\le N}P_{m-1}(X_n)
 -\mathbb EP_{m-1}(X^*)
 \right|
 \le
 \frac1N\sum_{k=0}^{m-1}\frac{(2\rho_0y)^k}{k!} \le\frac{m(2y)^m}{N} \ll N^{-1/2}
\]
for $y=N^{1/L^2}$ and $m=\lceil L\rceil$.

Thus, uniformly for $|\theta|,|\phi|\le\rho_0$,
\begin{align}
 \frac1N\sum_{n\le N}
 e^{i\theta S(\ell_1(n))+i\phi S(\ell_2(n))}
 ={}&
 \mathbb Ee^{i\theta S_1^*+i\phi S_2^*} +O\left(e^{-cL\log L}+N^{-1/2}\right).
 \label{eq:char-model}
\end{align}

Similarly,
\begin{equation}\label{eq:char-model_1}
 \frac1N\sum_{n\le N}e^{itS(n)}
 =
 \mathbb Ee^{itS_i^*}
 +O\left(e^{-cL\log L}+N^{-1/2}\right),
 \qquad |t|\le\rho_0,
\end{equation}
for either $i=1$ or $i=2$.

Write
\[
 u=e^{i\theta}-1,\qquad v=e^{i\phi}-1.
\]
Then
\[
 J_p=\mathbb Ee^{i\theta X_p+i\phi Y_p}
 =1+\frac{u+v}{p},
\]
and
\[
 P_p=\mathbb E e^{i\theta X_p} \mathbb E e^{i\phi Y_p}=
 \left(1+\frac up\right)\left(1+\frac vp\right)
 =1+\frac{u+v}{p}+\frac{uv}{p^2}.
\]
Hence
\[
 |J_p-P_p|
 =\frac{|uv|}{p^2}
 \le\frac{|\theta\phi|}{p^2}.
\]
Both $|J_p|$ and $|P_p|$ are at most one,  telescoping the finite products gives
\[
 \left|\prod_pJ_p-\prod_pP_p\right|
 \le\sum_p|J_p-P_p|
 \le|\theta\phi|\sum_p\frac1{p^2}
 \ll|\theta\phi|.
\]
That is,
\begin{equation}\label{eq:char-model_2}
 \left|
 \mathbb Ee^{i\theta S_1^*+i\phi S_2^*}
 -
 \mathbb Ee^{i\theta S_1^*}\,
 \mathbb Ee^{i\phi S_2^*}
 \right|
 \ll|\theta\phi|.
\end{equation}

Combining \eqref{eq:char-model}, \eqref{eq:char-model_1} and \eqref{eq:char-model_2}, we obtain
\[
 \left|
 \frac1N\sum_{n\le N}
 e^{i\theta S(\ell_1(n))+i\phi S(\ell_2(n))}
 -
 B_N(\theta)B_N(\phi)
 \right|
 \ll|\theta\phi|+e^{-cL\log L}+N^{-1/2}.
\]

Finally, the truncation estimates \eqref{truncation_1} and \eqref{truncation_2} imply
\[
 \left|
 C_N^\ell(\theta,\phi)
 -
 \frac1N\sum_{n\le N}
 e^{i\theta S(\ell_1(n))+i\phi S(\ell_2(n))}
 \right|
 \ll_{\ell_1,\ell_2}(|\theta|+|\phi|)\log L,
\]
and
\[
 |A_N(\theta)A_N(\phi)-B_N(\theta)B_N(\phi)|
 \ll_{\ell_1,\ell_2}(|\theta|+|\phi|)\log L.
\]
Therefore,
\[
 |\Delta_N(\theta,\phi)|
 \ll_{\ell_1,\ell_2}
 (|\theta|+|\phi|)\log L
 +|\theta\phi|
 +e^{-cL\log L}+N^{-1/2},
 \qquad
 |\theta|,|\phi|\le L^{-1/3}.
\]
This proves \eqref{eq:small-pointwise}.
\end{proof}

By Proposition~\ref{prop:small-frequency}, we immediately obtain the following estimate for the square integral over small frequencies.

\begin{corollary}\label{cor:small-L2}
For the value of $\rho=\sqrt{\frac{D\log L}{L}}$ in \eqref{eq:rho},
\begin{equation}\label{eq:small-L2}
 \int_{[-\rho,\rho]^2}|\Delta_N(\theta,\phi)|^2
       \dd\theta\dd\phi
 \ll_{\ell_1,\ell_2}\rho^4(\log L)^2.
\end{equation}
\end{corollary}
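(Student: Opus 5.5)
The plan is to integrate the pointwise bound of Proposition~\ref{prop:small-frequency} over the square $[-\rho,\rho]^2$. First we check that this square lies in the range where the proposition applies. Since $\rho=\sqrt{D\log L/L}$, we have $\rho\le L^{-1/3}$ for all sufficiently large $N$, because $\rho L^{1/3}=\sqrt{D\log L}\,L^{-1/6}\to0$. Hence on $[-\rho,\rho]^2$
\[
 |\Delta_N(\theta,\phi)|\ll_{\ell_1,\ell_2}(|\theta|+|\phi|)\log L+|\theta\phi|+\ee^{-cL\log L}+N^{-1/2}.
\]

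Next we simplify the right-hand side on the square, using $|\theta|,|\phi|\le\rho$:
\begin{itemize}
\item the first term is at most $2\rho\log L$;
\item the second satisfies $|\theta\phi|\le\rho^2\le\rho\log L$ once $\rho\le1$ and $L\ge e$;
\item the last two terms are negligible: $\ee^{-cL\log L}$ and $N^{-1/2}=\exp(-\tfrac12\ee^{L})$ are both far smaller than $\rho\log L\gg L^{-1/2}$.
\end{itemize}
So $|\Delta_N(\theta,\phi)|\ll\rho\log L$ uniformly on the square. Squaring gives $|\Delta_N|^2\ll\rho^2(\log L)^2$, and the square has area $4\rho^2$. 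Therefore
\[
 \int_{[-\rho,\rho]^2}|\Delta_N(\theta,\phi)|^2\dd\theta\dd\phi\ll_{\ell_1,\ell_2}\rho^4(\log L)^2,
\]
which is \eqref{eq:small-L2}.

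There is no serious obstacle here. The only points needing care are confirming $\rho\le L^{-1/3}$, so that the proposition is applicable, and confirming that the exponentially small error terms are dominated by $\rho\log L$. Both hold for $N$ sufficiently large. For bounded $N$ the estimate is trivial, since $|\Delta_N|\le2$ and the implied constant can absorb those cases.
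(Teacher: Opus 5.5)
Your proposal is correct and is the same argument the paper has in mind. The paper calls the corollary immediate from Proposition~\ref{prop:small-frequency}; you supply the details it leaves implicit: checking $\rho\le L^{-1/3}$, noting that $\ee^{-cL\log L}+N^{-1/2}$ is dominated by $\rho\log L$, and multiplying the uniform bound $|\Delta_N|^2\ll\rho^2(\log L)^2$ by the area $4\rho^2$ of the square.
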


By the proof of \cite[Lemma 2.3]{CharamarasRichter2025}, we have

\begin{corollary}\label{cor:affine-tail}
Set
\begin{equation}\label{eq:affine-R}
 R=\sqrt{L\log L}.
\end{equation}
Then, for all sufficiently large $N$,
\begin{equation}\label{eq:affine-tail}
 \tau_N(R)+\tau_{N,1}^\ell(R)+\tau_{N,2}^\ell(R)
 \ll_{\ell_1,\ell_2}L^{-1/2}.
\end{equation}
\end{corollary}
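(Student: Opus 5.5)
The plan is to bound each of the three tail densities separately by a second-moment (Chebyshev) argument, following the Turán--Kubilius approach in the proof of \cite[Lemma 2.3]{CharamarasRichter2025}. By Chebyshev's inequality, for any arithmetic function $g$ we have
\[
 \frac1N\#\{n\le N:|g(n)-L|>R\}\le \frac{1}{R^2}\cdot\frac1N\sum_{n\le N}|g(n)-L|^2 .
\]
We apply this with $g(n)=\Omega(n)$ and with $g(n)=\Omega(\ell_i(n))$. With $R^2=L\log L$, it then suffices to show that each second moment is $\ll_{\ell_1,\ell_2} L$. This gives each tail $\ll 1/\log L$, which is not yet $L^{-1/2}$. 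So the pure variance bound is too weak, and I would instead use a stronger (exponential-moment) concentration.

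\textbf{Step 1 (exponential moment).} Fix a parameter $0<\lambda\le 1$. We have
\[
 \frac1N\#\{n\le N:\Omega(n)-L>R\}\le e^{-\lambda(L+R)}\frac1N\sum_{n\le N}e^{\lambda\Omega(n)},
\]
and similarly for the lower tail, using $e^{-\lambda\Omega(n)}$. We bound the average of $z^{\Omega(n)}$ for real $z$ near $1$. For $z<2$, the Selberg--Delange method (or simply the elementary Shiu/Halász-type bound $\frac1N\sum_{n\le N}z^{\Omega(n)}\ll_z (\log N)^{z-1}$, uniform for $z\in[1/2,3/2]$) gives $\ll \exp((z-1)L)$. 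With $z=e^{\pm\lambda}$ we get the bound
\[
\exp\bigl((e^{\pm\lambda}-1\mp\lambda)L\mp\lambda R\bigr)\le \exp\bigl(\lambda^2L-\lambda R\bigr).
\]
Choosing $\lambda=R/(2L)=\tfrac12\sqrt{\log L/L}$ yields $\exp(-R^2/(4L))=(\log N)^{0}\cdot L^{-1/4}$. This is still not enough, so I would take $\lambda=R/L$ instead, which gives $e^{-R^2/(2L)}\cdot e^{O(\lambda^3L)}=L^{-1/2}(1+o(1))$. To make the constants sharp, one really needs the Gaussian-type tail $\exp(-(1+o(1))R^2/(2L))$, which the uniform Selberg--Delange asymptotic $\frac1N\sum z^{\Omega(n)}\sim C(z)(\log N)^{z-1}$ provides, with $C(z)$ bounded near $z=1$.

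\textbf{Step 2 (affine forms).} For $\Omega(\ell_i(n))$, write $m=a_in+b_i\le C_*N$. We have $e^{\pm\lambda\Omega(m)}\ge0$, so
\[
\frac1N\sum_{n\le N}e^{\pm\lambda\Omega(\ell_i(n))}\le \frac1N\sum_{\substack{m\le C_*N\\ m\equiv b_i\ (a_i)}}e^{\pm\lambda\Omega(m)}\le C_*\cdot\frac{1}{C_*N}\sum_{m\le C_*N}e^{\pm\lambda\Omega(m)},
\]
and $\log\log(C_*N)=L+o(1)$. Hence the same bound from Step 1 holds with a constant depending on $\ell_1,\ell_2$. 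Summing the three upper tails and three lower tails gives \eqref{eq:affine-tail}.

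\textbf{Main obstacle.} The delicate point is getting the exact exponent $1/2$. This requires the exponential-moment bound to have constant exactly $\exp((z-1)L+O(1))$ uniformly for $|z-1|\ll\sqrt{\log L/L}$, together with the choice $\lambda=R/L$, so that $\lambda^2L/2-\lambda R=-R^2/(2L)=-\tfrac12\log L$. I expect the uniform Selberg--Delange estimate (as used in \cite{CharamarasRichter2025}) to supply exactly this. The error terms $O(\lambda^3 L)=o(1)$ are harmless.
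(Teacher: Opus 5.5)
Your proof is correct. It is also more explicit than the paper, which gives no argument for this corollary beyond pointing to the proof of Lemma 2.3 in \cite{CharamarasRichter2025}, and in particular never says how the concentration of $\Omega(n)$ transfers to $\Omega(\ell_i(n))$.

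Your argument is self-contained and has two ingredients:
\begin{itemize}
\item a Chernoff bound with $\lambda=R/L$, using the uniform moment bound $\frac1N\sum_{n\le N}z^{\Omega(n)}\ll(\log N)^{z-1}$ for $z\in[1/2,3/2]$;
\item for the affine forms, injectivity of $n\mapsto a_in+b_i$ into $[1,C_*N]$, positivity of $z^{\Omega(m)}$, and $\log\log(C_*N)=L+o(1)$.
\end{itemize}
Together these give exactly $\ll_{\ell_1,\ell_2}L^{-1/2}$, because the exponent is
$\lambda^2L/2-\lambda R+O(\lambda^3L)+O(1)=-\tfrac12\log L+O(1)$.
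The citation route buys brevity. Your route buys a complete verification, including the affine case the paper glosses over.

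Three points in the write-up should be cleaned up.
\begin{itemize}
\item In Step 1 the lower tail also produces $-\lambda R$ in the exponent, so ``$\mp\lambda R$'' should read ``$-\lambda R$''.
\item The detour through $\lambda=R/(2L)$ comes only from the lossy bound $e^{\pm\lambda}-1\mp\lambda\le\lambda^2$. Use $e^{\pm\lambda}-1\mp\lambda=\lambda^2/2+O(\lambda^3)$ from the start and take $\lambda=R/L$ directly.
\item A tail of the form $\exp(-(1+o(1))R^2/(2L))$ would only give $L^{-1/2+o(1)}$, which is not enough. What you need, as you correctly say at the end, is an additive $O(1)$ in the exponent. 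The elementary Halberstam--Richert bound already supplies this: $\sum_{n\le x}z^{\Omega(n)}\ll x(\log x)^{z-1}$ with an absolute constant uniformly for $z\in[1/2,3/2]$, and $(\log N)^{z-1}=\exp((z-1)L)$ exactly. So no Selberg--Delange asymptotic is needed, and your ``main obstacle'' is not actually an obstacle.
\end{itemize}
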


\begin{proof}[Proof of Theorem~\ref{thm:main}]
Choose $\rho$ as in Corollary~\ref{cor:high-frequency}. That corollary
and Corollary~\ref{cor:small-L2} imply
\begin{equation}\label{eq:global-L2}
 \norm{\Delta_N}_{L^2([-\pi,\pi]^2)}
 \ll_{\ell_1,\ell_2}\rho^2\log L+L^{-14}.
\end{equation}
Then Lemma
\ref{lem:fourier-reduction} and Corollary~\ref{cor:affine-tail} give
\begin{align*}
 \sum_{k,j}|d_N(k,j)|
 &\ll_{\ell_1,\ell_2}
       R\rho^2\log L+RL^{-14}+L^{-1/2}\\
 &\ll_{\ell_1,\ell_2}
       \sqrt{L\log L}\,\frac{\log L}{L}\,\log L\\
 &\ll_{\ell_1,\ell_2}\frac{(\log L)^{5/2}}{\sqrt L}.
\end{align*}
This proves \eqref{eq:main-tv}. The difference of the two averages
in \eqref{eq:main-test} is exactly
\[
 \sum_{k,j\ge0}a(k,j)d_N(k,j),
\]
whose modulus is at most $\norm a_\infty\sum_{k,j}|d_N(k,j)|$.
This proves Theorem~\ref{thm:main}.
\end{proof}

\end{document}